\numberwithin{equation}{subsection}
\newtheorem{theorem}{Theorem}[section]
\newtheorem{prop}[theorem]{Proposition}
\newtheorem{lemma}{Lemma}[section]
\newtheorem{corollary}{Corollary}[section]
\newtheorem{example}{Example}[subsection]
\newtheorem{definition}{Definition}[section]
\newtheorem{conjecture}[theorem]{Conjecture}
\newtheorem{question}[theorem]{Question}
\newcommand{\dxs}{\Delta x^2}
\newcommand{\na}{\nabla^n_j}
\def\ex{\begin{example}}
\def\eex{\end{example}}
\def\exx{\end{example}}
\def\T{\begin{theorem}}
\def\TT{\end{theorem}}
\def\D{\begin{definition}}
\def\DD{\end{definition}}
\def\l{\begin{lemma}}
\def\ll{\end{lemma}}
\def\c{\begin{corollary}}
\def\cc{\end{corollary}}
\def\cj{\begin{conjecture}}
\def\cjj{\end{conjecture}}
\def\e{\begin{equation}}
\def\ee{\end{equation}}
\def\p{\begin{prop}}
\def\pp{\end{prop}}
\def\q{\begin{question}}
\def\qq{\end{question}}
\def \N {\mathbb{N}}
\def \R {\mathbb{R}}
\def \e {\epsilon}
\def \b {\beta}
\def \O {\Omega}
\def \a {\alpha}
\begin{document}

\baselineskip 15pt

\title{\bf Numerical Analysis of the 1-D Parabolic Optimal Transport Problem}
\author{Abby Brauer\\Lewis and Clark College\\abrauer@lclark.edu \\ \\ Megan Krawick\\Youngstown State University\\mekrawick@student.ysu.edu\\ \\ Manuel Santana\\ Utah State University\\ manuelarturosantana@gmail.com\\ \\ Advisors:\\ Farhan Abedin \\ abedinf1@msu.edu\\ Michigan State University \\ \\ Jun Kitagawa\\kitagawa@math.msu.edu\\ Michigan State University}
\date{}

\maketitle

\begin{abstract}
Numerical methods for the optimal transport problem is an active area of research. Recent work of Kitagawa and Abedin shows that the solution of a time-dependent equation converges exponentially fast as time goes to infinity to the solution of the optimal transport problem. This suggests a fast numerical algorithm for computing optimal maps; we investigate such an algorithm here in the 1-dimensional case. Specifically, we use a finite-difference scheme to solve the time-dependent optimal transport problem and carry out an error analysis of the scheme. A collection of numerical examples is also presented and discussed.

\end{abstract}

\section{Introduction}

\subsection{The Optimal Transport Problem}

The centuries old optimal transport problem asks how to find the cheapest way to transport materials from a given source to a target location \cite{Monge}. In the 1-dimensional case and for the quadratic cost function, the mathematical formulation of the problem is as follows. Let $[A,B],[C,D] \subset \R$ be bounded intervals, representing, respectively, the source and target domains. Consider two positive functions $f: [A,B] \to \R$ and $g: [C,D] \to \R$ satisfying the condition
\begin{equation}\label{massbalance}
\int_A^B f(x) = \int_C^D g(x) = 1.
\end{equation}
The function $f$ can be thought of as describing the mass distribution of a pile of dirt while $g$ describes the depth of the hole the dirt is intended to fill. The mass balance condition \eqref{massbalance} encodes the fact that the amount of dirt is equal to the size of the hole. 

Define the class of admissible transport maps
$$\mathbb{M} = \left\{S : [A,B] \to [C,D] \text{ satisfying } \int_{S^{-1}(E)} f(x) \ dx = \int_{E} g(y) \ dy \text{ for any open set } E \subset [C,D] \right\}.$$
For any $S \in \mathbb{M}$, define the total cost of $S$ to be the quantity
$$\mathcal{C}(S) = \int_A^B |x - S(x)|^2 \ dx.$$
The optimal transport problem is to find a map $T \in \mathbb{M}$ that minimizes the total cost among all maps $S \in \mathbb{M}$, i.e. 
$$\mathcal{C}(T) = \min_{S \in \mathbb{M}} \mathcal{C}(S).$$
By a celebrated result of Brenier \cite{Brenier}, under appropriate conditions on $f$ and $g$, the optimal map $T$ exists and is unique.
In addition, $T(x) = u'(x)$ where $u:[A,B] \to \R$ is a convex function that satisfies the boundary-value problem 
\begin{equation}\label{OT}\tag{O-T}
 \begin{cases}
 u''(x) = \frac{f(x)}{g(u'(x))},\\
 u'(A) = C, \; u'(B) = D.
 \end{cases}   
\end{equation}
Notice that \eqref{OT} implies
$$\int_A^x f(p) \ dp = \int_A^x g(u'(p)) u''(p) \ dp = \int_{u'(A)}^{u'(x)} g(q) \ dq =  \int_{C}^{u'(x)} g(q) \ dq.$$
    If we define the cumulative distribution functions of $f$ and $g$, respectively, as
    \begin{equation}\label{CDF}
    F(x) := \int_A^x f(p) \ dp, \quad x \in [A,B],\qquad 
    G(y) := \int_C^y g(q) \ dq, \quad y \in [C,D],
    \end{equation}
    we then have the relation
    $$F(x) = G(u'(x)) \quad \text{for all } x \in [A,B].$$
    Since $g$ is positive on $[C,D]$, we have $G'(y) = g(y) > 0$, so $G$ is strictly increasing, hence invertible. Therefore, the optimal map $T$ can be expressed in terms of $F$ and $G$ as
    \begin{equation}\label{quantileInverse}
    T(x) = u'(x) = G^{-1}(F(x)) \quad \text{for all } x \in [A,B].
    \end{equation}
In practice, given $f$ and $g$, it is difficult to compute $F$ and $G$ analytically. This provides motivation to develop alternate numerical methods of obtaining the optimal map $T$. Much work has been done on the numerical approximation of optimal maps in low dimensions \cite{Benamou, Benamou2, 1901.05108}. Here we consider an approach based on a  time-dependent version of \eqref{OT} studied in \cite{AbedinKitagawa, Kitagawa12} and referred to as the \emph{parabolic optimal transport problem}. In our setting, this problem can be stated as follows: find a time-dependent function $v(t,x)$ that satisfies
\begin{equation}\label{POT}\tag{Parabolic O-T}
\begin{cases}
v_t = \log(v_{xx}) - \log\left(\frac{f(x)}{g(v_x)}\right ) & \text{in } (0,\infty) \times (A,B),\\
v_x(t,A) = C, \quad v_x(t,B) = D & \text{for all } t \geq 0,\\
v(0,x) = u_0(x),\\ 
v(t, \cdot) \text{ strictly convex for all } t \geq 0.
\end{cases}
\end{equation} 
Here, $u_0(x)$ is a given convex function on $[A,B]$ that satisfies $u_0'(A) = C$ and $u_0'(B) = D$. It is shown in \cite{Kitagawa12} that $\lim_{t \to \infty} v(t,x) = u(x)$ where $u$ solves \eqref{OT}. The more recent work \cite{AbedinKitagawa} shows the convergence is exponentially fast in $t$.

\subsection{Discretization of the Problem }
    The purpose of this paper is to carry out a numerical approximation of \eqref{POT} and study a number of examples. To numerically approximate \eqref{POT}, we choose to use a finite difference scheme. This requires discretizing the interval $[A,B]$ using $J \in \mathbb{N}$ grid points
     $$x_j = A + \frac{j(B-A)}{J}, \quad j = -1,\ldots, J+1.$$
     The range of indices is chosen this way in order to provide an extra point outside each end of the interval $[A,B]$. We will use the notation
     $$\Delta x = \frac{B-A}{J}$$
     to denote the spatial grid resolution and use the short-hand $\Delta x^2 := (\Delta x)^2$. To discretize the time interval $[0,\infty)$, we let $\{t_n\}_{n = 0}^{\infty}$ be a non-negative sequence of strictly increasing time values with $t_0 = 0$. Denote the $n$-th time step by
     $$\Delta t_n = t_{n+1} - t_n.$$
     We denote by $\mathcal{G}$ the set of all grid points $\{(t_n, x_j):n \in \N, \  j \in -1, \ldots, J+1 \}$. 
     \newline
     In order to motivate the finite difference scheme, we recall the following consequences of the Taylor Remainder Theorem:
     \begin{align}
     v_x(t_n,x_j) &= \frac{v(t_n, x_{j+1}) - v(t_n,x_{j-1})}{2\Delta x} - \frac{v_{xxx}(t_n,x_j+\psi) + v_{xxx}(t_n,x_j-\psi) }{12}\dxs \quad \text{for some } \psi \in (0,\Delta x),\label{eqn: vx taylor}\\
    v_{xx}(t_n,x_j) &= \frac{v(t_n, x_{j+1}) + v(t_n,x_{j-1}) - 2v(t_n,x_j)}{\Delta x^2} - \frac{v_{xxxx}(t_n,x_j+\gamma) + v_{xxxx}(t_n,x_j-\gamma)}{24}\dxs \quad \text{for some } \gamma \in (0,\Delta x),\label{eqn: vxx taylor}\\
    v_t(t_n,x_j) & = \frac{v(t_{n+1},x_j) - v(t_n,x_j)}{\Delta t_n} - \frac{v_{tt}(t_n+\kappa,x_j)}{2}\Delta t_n \quad \text{for some } \kappa \in (0,\Delta t_n).\label{eqn: vt taylor}
\end{align}

Our goal is to construct a grid function $U: \mathcal{G} \to \R$ such that $U(t_n,x_j) \approx v(t_n,x_j)$ for $j = 0, \ldots J$ and $n \in \N$, where $v$ is the solution of \eqref{POT}. We will, from here onward, use the short-hand $v^n_j = v(t_n,x_j)$ and $U^n_j = U(t_n,x_j)$.
Neglecting the terms of order $\Delta t_n$ and $\Delta x^2$ in the Taylor expansions of $v$ above, we obtain the definition of the first and second order finite difference operators acting on the approximation $U$. For convenience we will define two operators for some arbitrary function $\phi$.
\D The first order centered difference operator $\nabla^n_j$ and the second order centered difference operator $\Delta^n_j$ are defined as
\begin{align*}
\nabla^n_j \phi & := \frac{\phi^n_{j+1} - \phi^n_{j-1}}{2\Delta x}, \quad j = 0, \ldots J,\\
\end{align*}
\begin{align*}
\Delta^n_j \phi & := \frac{\phi^n_{j+1} + \phi^n_{j-1} - 2\phi^n_j}{\Delta x^2} \quad j = 0, \ldots J.
\end{align*} 
\DD

The approximation for $v_{xx}$ at the boundary points $x = A, B$ requires using the boundary conditions in \eqref{POT}. We use a backward difference first space derivative approximation for the boundary at $A$ and a forward difference approximation for the first space derivative at $B$ with the exact values for these derivatives as given by the Neumann boundary conditions, 
\begin{align}\label{Cerror}
    C = v_x(t_n,A) &= \frac{v(t_n, x_1) - v(t_n,x_{-1})}{2\Delta x} - \frac{v_{xxx}(t_n, A + \psi) + v_{xxx}(t_n, A - \psi)}{12} \Delta x^2 \quad \text{for some } \psi \in (0,\Delta x)\\
    D = v_x(t_n,B) &= \frac{v(t_n,x_{J+1})-v(t_n, x_{J-1})}{2 \Delta x} - \frac{v_{xxx}(t_n,B + \psi) + v_{xxx}(t_n,B - \psi)}{12} \Delta x^2 \quad \text{for some } \psi \in (0,\Delta x).
\end{align}

Utilizing the definition of the finite difference method presented in \cite{Tadmor2012}, we implement the following centered difference approximation of \eqref{POT}
\begin{equation}\label{finitedifference}\tag{F-D}
\begin{cases}
     U_j^{n+1} = \Bigg(\log\left(\Delta^n_j U \right)-\log\left(\frac{f(x_j)}{g\left(\nabla^n_j U\right)}\right)\Bigg)\Delta t_n + U^n_j, \quad j = 0, \ldots J,\\
     \nabla U^n_0 = C, \quad \nabla U^n_J = D,\\
     U^n_{-1} := U^n_1 - 2C\Delta x, \quad U^n_{J+1} = U^n_{J-1} + 2D \Delta x,\\
     U^0_j = u_0(x_j).
\end{cases}
\end{equation}

\subsection{Structure of the Paper}
The remainder of this paper is structured as follows. In Section \ref{sec: errorAnalysis} we show that the error between our numerical approximation and the true solution of \eqref{POT} is bounded by quantities depending on previous time-steps. In Section \ref{sec:code} we show how to measure the asymptotic closeness of \eqref{finitedifference} to \eqref{OT}. Finally in Section \ref{sec: examples} we discuss the code for implementation of the finite difference scheme as well as numerical findings and applications to quantile functions. The proofs of explicit derivative bounds for the solution of \eqref{POT} used in calculations are given in the appendix.

\section{Error Analysis of Finite Difference Scheme}\label{sec: errorAnalysis}
In order for a numerical approximation to be effective the error at a given time step must be bounded by quantities known from the previous steps. In this section we establish such error bounds for \eqref{finitedifference}. Recall $\Delta ^n_j U$ is a finite approximation of the second derivative and therefore has some error. Although we would expect $\Delta^n_j U$ to stay positive because it is approximating a convex function, how to guarantee this is not yet clear. Therefore we must assume the condition of $\Delta^n_j U$ staying positive for all $n$ in the following error analysis.\\ 
We first define the infinity norm $||\cdot^n||_\infty$ to be the maximum value of $\cdot$ at time step $n$ for all points in $\mathcal{G}$. We will also now define several bounds on the derivatives of $v$.

\D Define the derivative bounds $K$, $\Gamma$, $\Psi$ $\delta_1$, and $\delta_2$ to satisfy
\begin{align*}
    K &\geq |v_{tt}(t, x)|,\\
    \Gamma &\geq |v_{xxxx}(t_n, x_j)|,\\
    \Psi & \geq |v_{xxx} (t_n, x_j)|,\\
    0 &< \delta_1 \leq v_{xx}(t, x) \leq \delta_2.
\end{align*}
for all $t\in [0, \infty)$ and $x\in [A, B]$.
\DD

It is possible to explicitly calculate the constants $K, \Gamma, \Psi, \delta_1, \delta_2$ in terms of the mass distributions $f, g$ and the initial function $v_0$. A full discussion of these calculations is given in the appendix.

We are now prepared to state our error bound for our numerical scheme.
\T\label{theo: error bound}
Assuming $\Delta^n_j U$ remains positive at every time step $n$, positive time and spatial grid steps, and that the following is true for $\Delta x$ and $\Delta t$:
\begin{align}\label{errorConditions}
    \Delta x &= \min\left\{\frac{3 \delta_1}{2\Psi}, \sqrt{\frac{6\delta_1}{\Gamma}} \right\}, \notag \\
    \frac{\max|g'(y)|\Delta t_n}{2\min g(y)  \Delta x}  &\le \frac{\Delta t_n}{\dxs \min\left\{\frac{1}{2} \delta_1, \min\{\Delta^n_j U\}\right\}} \le \frac{1}{2},
\end{align}
\eqref{finitedifference} has the following maximum error bound on the interior points (points not on the boundary, $j = 1, \dots, J-1$) at time step n.
\begin{equation}\label{espError}
    ||U^n - v^n||_\infty \leq  \sum_{i=0}^{n-1} \left( \Delta t_{i} \left(\frac{\Delta t_{i}}{2}K + \frac{\Delta x^2}{6 \delta_1}\Gamma \right) +  \frac{\max |g '|}{\min g} \left( \frac{\dxs}{6} \Psi  \right)  \right)
\end{equation}
\TT

This theorem shows that our finite difference scheme is close to the real solution of \eqref{POT} for all $t_n$. We will spend the rest of Section \ref{sec: errorAnalysis} proving Theorem \ref{theo: error bound}. For a more formal discussion of the efficacy of finite difference schemes see \cite{Tadmor2012}.

\subsection{Calculation of Local Error}\label{LocalErrorSection}
In this section we prove the first necessary lemma for proving Theorem \ref{theo: error bound}. We start with several definitions.

\D
\begin{enumerate}
\item[(i)] Local approximation: 
$$V^{n+1}_j := \Bigg(\log\left(\Delta^n_j v \right)-\log\left(\frac{f(x_j)}{g\left(\nabla^n_j v\right)}\right)\Bigg)\Delta t_n + v^n_j , \quad  j = 1,\ldots, J-1.$$

\item[(ii)] Local error $\tau$ at grid point $(x_j, t_n)$:
$$\tau^n_j := V^n_j - v_j^n.$$

\item[(iii)] Local discretization error $\theta$ at grid point $(x_j, t_n)$:
\begin{equation}
    \theta^n_j := \frac{v^{n+1}_j - v^n_j}{\Delta t_n} - \log\left(\Delta^n_j v \right) + \log\left(\frac{f(x_j)}{g(\na v)}\right).
\end{equation}
\end{enumerate}
\DD

With these definitions in hand we state the first lemma necessary for proving Theorem \ref{theo: error bound}:
\l \label{tau lemma} Assuming the conditions  \eqref{errorConditions}, the local error $\tau$ for any point on time step $n$ has an upper bound
$$||\tau^{n+1}||_\infty \leq \Delta t_n \left( \frac{\Delta t_n K}{2} + \frac{\Delta x^2 \Gamma }{6 \delta_1}+ \frac{\dxs \max |g '| \Psi }{6\min g}  \right)$$
\ll

\begin{proof}
First we note that local discretization error provides a useful identity
\begin{equation}\label{eqn: tau expression}
    V^{n+1}_j - v^{n+1}_j= \tau^{n+1}_j = -\Delta t_n \theta^n_j.
\end{equation}
Therefore, to calculate the bounds on the local error, $\tau^{n+1}_j$, we first need to estimate $\theta^n_j$. Using \eqref{POT}, we can substitute for $f(x_j)$ and get

\begin{equation}\label{eqn: theta expression}
    \theta^n_j = \frac{v^{n+1}_j - v^n_j}{\Delta t_n}  - v_t(t_n,x_j) - \left(\log\left(\Delta^n_j v\right) - \log (v_{xx}(t_n,x_j)) \right) - (\log \left( g\left( \na v\right)\right) - \log\left( g\left( v_x(t_n, x_j)\right)\right)).
\end{equation}
From  \eqref{eqn: vt taylor}, we have  \begin{equation}\label{timediff}
\frac{v^{n+1}_j - v^n_j}{\Delta t_n}  - v_t(t_n,x_j) = \frac{v_{tt}(t_n+\kappa,x_j)}{2}\Delta t_n .
\end{equation}
To simplify $\log\left(\Delta^n_j v\right) - \log (v_{xx}(t_n,x_j)) $ we use the Mean Value Theorem to find a number $\eta$  between  $v_{xx}(t_n,x_j)$ and $\Delta^n_j v$ such that
$$
    \log\left(\Delta^n_j v\right) - \log({v_{xx}}(t_n,x_j)) = \frac{1}{\eta}(\Delta^n_j v - v_{xx}(t_n,x_j)),
$$
The Taylor approximation \eqref{eqn: vxx taylor} then shows
\begin{equation}\label{eqn: v log diff}
    \log\left(\Delta^n_j v\right) = \log(v_{xx}(t_n, x_j)) + \frac{\dxs}{\eta}\left( \frac{v_{xxxx}(t_n,x_j+\gamma) + v_{xxxx}(t_n,x_j-\gamma)}{24} \right).
\end{equation}
The Mean Value Theorem implies there exists a number $\mu$ between $g(v_x(t_n, x_j))$ and $g\left(\na v\right)$, and a number $\chi$ between $g'(v_x(t_n, x_j))$ and $g'(\na v)$ such that
$$
\log \left( g\left( \na v\right)\right) - \log\left( g\left( v_x(t_n, x_j)\right)\right) = \frac{1}{\mu}\left( g\left(\na v\right) - g(v_x(t_n, x_j)) \right) = \frac{\chi }{\mu} \left( \na v - v_x(t_n, x_j)\right).
$$
Using the Taylor expansion \eqref{eqn: vx taylor}, we find that
\begin{equation}\label{Glogs}
    \log \left( g\left( \na v\right)\right) - \log\left( g\left( v_x(t_n, x_j)\right)\right) =  \frac{\chi \dxs}{\mu} \left(\frac{v_{xxx}(t_n,x_j+\psi) + v_{xxx}(t_n,x_j-\psi) }{12}\right).
\end{equation}
Substituting \eqref{timediff}, \eqref{eqn: v log diff} and \eqref{Glogs} into \eqref{eqn: theta expression} gives us

\begin{small}
\begin{align}\label{eqn: tau expression 1}
    \tau^{n+1}_j = & -\Delta t_n\theta^n_j \notag \\
    = & \Delta t_n \left(-\frac{\Delta t_n}{2}v_{tt}(t_n+\kappa, x_j) + \frac{\Delta x^2}{24 \eta} \left( v_{xxxx}(t_n,x_j+\gamma) + v_{xxxx}(t_n,x_j-\gamma)\right) + \frac{\Delta x^2 \chi}{12 \mu} \left( v_{xxx}(t_n,x_j+\psi) + v_{xxx}(t_n,x_j-\psi)\right)\right).
\end{align}
\end{small}
Using the derivative bounds in \eqref{eqn: tau expression 1} shows
\begin{equation}\label{eqn: tau expression 2}
|\tau^{n+1}_j| \leq \Delta t_n \left(\frac{K \Delta t_n}{2} + \frac{\Delta x^2 \Gamma}{12 \eta} + \frac{\Delta x^2 \Psi \chi}{6 \mu}\right).
\end{equation}
If we now choose $\Delta x$ to satisfy 
\begin{align}\label{eqn: delta x restriction 1}
    \Delta x^2\leq \frac{6\delta_1}{\Gamma},
\end{align}
we obtain the inequality 
$$
\frac{1}{12}\max_{x,t} |v_{xxxx}|  \dxs \le \frac{1}{2}\min_{x,t} v_{xx}.
$$
Then by the Taylor expansion \eqref{eqn: vxx taylor}, we obtain 
\begin{equation*}
    \min_{j} \{\Delta ^n_j v\} \ge \frac{1}{2}\min_{x,t} v_{xx} ,
\end{equation*}
hence
\begin{equation*}
    \eta \ge \min_j\left\{\Delta^n_j v, v_{xx}(t_n, x_j) \right\}\geq \frac{\delta_1}{2}.
\end{equation*}
Additionally we know $\chi$ is bounded from above by $\max |g'|$ and $\mu$ is bounded from below by $\min g$. Substituting the constants $K$, $\Gamma$,$\Psi$, and $\delta_1$ into \eqref{eqn: tau expression 2} and using the triangle inequality with our $g$ bounds we obtain
\begin{equation}
    |\tau^{n+1}_j| \leq \Delta t_n \left( \frac{\Delta t_n K}{2} + \frac{\Delta x^2 \Gamma }{6 \delta_1}+ \frac{\dxs \max |g '| \Psi }{6\min g}  \right) \quad \text{for all } j \in \{1, \ldots, J-1\}.
\end{equation}
under the restriction \eqref{eqn: delta x restriction 1}. Finally we can take the max norm of $\tau^{n+1}_j$ over all $j = 1, \ldots, J-1$ to obtain
\begin{equation}\label{taudef}
    ||\tau^{n+1}||_\infty \leq \Delta t_n \left( \frac{\Delta t_n K}{2} + \frac{\Delta x^2 \Gamma }{6 \delta_1}+ \frac{\dxs \max |g '| \Psi }{6\min g}  \right)
\end{equation}
\end{proof}

\subsection{Calculation of Total Error}\label{totalErrorSection}
In this section we prove an additional lemma. We must first define another error term.
\D Define the error term  $\varepsilon$ 
$$\varepsilon^n_j := U^{n+1}_j - V^{n+1}_j.$$
\DD

\begin{lemma}\label{eps Lemma}
Assuming $\Delta ^n_j U$ stays positive for all $n$ and the conditions in \ref{errorConditions}, the $\varepsilon$ is bounded by:
$$
||\varepsilon^n||_\infty \leq \sum^{n-1}_{i = 0} \Delta t_i \left( \frac{\Delta t_i K}{2} + \frac{\Delta x^2 \Gamma }{6 \delta_1}+ \frac{\dxs \max |g '| \Psi }{6\min g}  \right).
$$
\end{lemma}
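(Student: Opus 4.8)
The plan is to prove the estimate by induction on $n$, in the classical ``consistency plus stability implies convergence'' style: Lemma~\ref{tau lemma} supplies the consistency (local truncation) bound, while the algebraic form of the scheme \eqref{finitedifference} supplies a discrete maximum principle. Writing $w^n_k := U^n_k - v^n_k$ and subtracting the update for $V^{n+1}_j$ from the update for $U^{n+1}_j$ in \eqref{finitedifference} (the $\log f(x_j)$ terms cancel), I would first record the identity
\[
\varepsilon^n_j = \Delta t_n\bigl(\log(\Delta^n_j U)-\log(\Delta^n_j v)\bigr) + \Delta t_n\bigl(\log g(\nabla^n_j U)-\log g(\nabla^n_j v)\bigr) + w^n_j .
\]

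Next I would linearize both logarithm differences exactly as in the proof of Lemma~\ref{tau lemma}: the mean value theorem produces $\xi$ between $\Delta^n_j U$ and $\Delta^n_j v$, a positive $\mu$ between $g(\nabla^n_j U)$ and $g(\nabla^n_j v)$, and $\chi$ between $\nabla^n_j U$ and $\nabla^n_j v$; since $\Delta^n_j$ and $\nabla^n_j$ are linear, the identity above rewrites $\varepsilon^n_j$ as a three–point stencil applied to $w^n$,
\[
\varepsilon^n_j = \Bigl(\tfrac{\Delta t_n}{\xi\,\dxs}+\tfrac{\Delta t_n\, g'(\chi)}{2\mu\,\Delta x}\Bigr)w^n_{j+1} + \Bigl(1-\tfrac{2\Delta t_n}{\xi\,\dxs}\Bigr)w^n_j + \Bigl(\tfrac{\Delta t_n}{\xi\,\dxs}-\tfrac{\Delta t_n\, g'(\chi)}{2\mu\,\Delta x}\Bigr)w^n_{j-1},
\]
whose three coefficients sum to $1$. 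The heart of the argument is to show, under the hypotheses, that these coefficients are all nonnegative, so that $\varepsilon^n_j$ is a \emph{convex combination} of $w^n_{j-1},w^n_j,w^n_{j+1}$ and therefore $|\varepsilon^n_j|\le\|w^n\|_\infty=\|U^n-v^n\|_\infty$. Nonnegativity of $1-\tfrac{2\Delta t_n}{\xi\,\dxs}$ should follow by bounding $\xi$ below: the assumed positivity of $\Delta^n_j U$ together with the bound $\Delta^n_j v\ge\tfrac{\delta_1}{2}$ established inside the proof of Lemma~\ref{tau lemma} gives $\xi\ge\min\{\tfrac{\delta_1}{2},\min_j\Delta^n_j U\}$, and then the right inequality in \eqref{errorConditions} forces $\tfrac{\Delta t_n}{\xi\,\dxs}\le\tfrac12$; nonnegativity of the two off-diagonal weights amounts to $\tfrac{\Delta t_n}{\xi\,\dxs}\ge\tfrac{\Delta t_n|g'(\chi)|}{2\mu\,\Delta x}$, which is what the left inequality of \eqref{errorConditions} is designed to deliver (using $|g'(\chi)|\le\max|g'|$ and $\mu\ge\min g$). \textbf{This is the step I expect to be the main obstacle}: $\xi$ is only known to lie between $\Delta^n_j U$ and $\Delta^n_j v$, so squeezing $\tfrac{\Delta t_n}{\xi\,\dxs}$ between the advective weight and $\tfrac12$ — i.e.\ matching the two-sided CFL-type condition \eqref{errorConditions} to the signs of the stencil weights — is the delicate part of the proof.

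With the maximum principle $|\varepsilon^n_j|\le\|U^n-v^n\|_\infty$ in hand, I would close the induction using the decomposition $w^n_j = U^n_j - v^n_j = (U^n_j - V^n_j) + (V^n_j - v^n_j) = \varepsilon^{n-1}_j + \tau^n_j$, which gives $\|\varepsilon^n\|_\infty\le\|\varepsilon^{n-1}\|_\infty+\|\tau^n\|_\infty$. The base case is $\varepsilon^0\equiv 0$: since $U^0_j = u_0(x_j)=v^0_j$ at every grid point entering the stencils, $\Delta^0_j U=\Delta^0_j v$ and $\nabla^0_j U=\nabla^0_j v$, hence $U^1_j=V^1_j$, which matches the empty sum at $n=0$. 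For the inductive step, the inductive hypothesis bounds $\|\varepsilon^{n-1}\|_\infty$ by $\sum_{i=0}^{n-2}\Delta t_i\bigl(\tfrac{\Delta t_i K}{2}+\tfrac{\dxs\Gamma}{6\delta_1}+\tfrac{\dxs\max|g'|\Psi}{6\min g}\bigr)$ and Lemma~\ref{tau lemma} (with time index $n-1$) bounds $\|\tau^n\|_\infty$ by $\Delta t_{n-1}\bigl(\tfrac{\Delta t_{n-1} K}{2}+\tfrac{\dxs\Gamma}{6\delta_1}+\tfrac{\dxs\max|g'|\Psi}{6\min g}\bigr)$; adding these two yields the stated bound for index $n$.

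One technical point must be addressed separately: for $j=1$ and $j=J-1$ the stencil above reaches $w^n$ at the endpoints $j=0$ and $j=J$, where $V$ (hence $\tau$ and $\varepsilon^{n-1}$) was only defined for $j\in\{1,\ldots,J-1\}$, so the decomposition $w^n=\varepsilon^{n-1}+\tau^n$ is not directly available there. To handle this I would extend the definition of $V^{n+1}_j$ to $j\in\{0,J\}$ by the same formula, using the ghost values of $v$ coming from the boundary Taylor expansions \eqref{Cerror}, and check that the corresponding $\tau^n_0,\tau^n_J$ still satisfy (a possibly mildly weakened version of) the Lemma~\ref{tau lemma} estimate; alternatively one keeps all norms over $\{1,\ldots,J-1\}$ and propagates the endpoint contributions through the ghost-point rules $U^n_{-1}=U^n_1-2C\Delta x$ and $U^n_{J+1}=U^n_{J-1}+2D\Delta x$.
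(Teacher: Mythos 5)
Your proposal follows essentially the same route as the paper: the same three-point-stencil linearization of the log differences via the mean value theorem, the same observation that under $s < r \le \tfrac12$ the weights $|r-s|+|1-2r|+|r+s|$ sum to $1$ (your convex-combination phrasing), the same decomposition $U^n_j - v^n_j = \varepsilon + \tau$, and the same telescoping from $\varepsilon^0 = 0$. The boundary-index issue you flag at the end is real but is deferred by the paper to its separate boundary section rather than resolved inside this lemma.
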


\begin{proof}
Note by definition
\begin{equation}\label{e1}
    \varepsilon^n_j  = \left(\log(\Delta^n_j U) - \log(\Delta^n_j v) - \left(\log\left( g\left(\na v\right) \right)  - \log\left( g\left(\na U\right) \right) \right) \right)\Delta t_n + (U^n_j - v^n_j).
\end{equation}
By the Mean Value Theorem, there is some number $\rho$ between $g(\na U)$ and $g\left(\na v\right)$, and another number $\omega$ between $g'(\na U)$ and $g'(\na v)$ such that
\begin{equation}\label{e3}
    \log \left(g\left(\na v \right)\right) - \log \left(g\left( \na U \right)\right) = \frac{\omega}{ 2\Delta x \rho} \left(v^n_{j+1} - U^n_{j+1} + v^n_{j-1} - U^n_{j-1} \right).
\end{equation}
Similarly, there is some numnber $\xi$ between $\Delta^n_j v$ and $\Delta^n_j U$ such that
\begin{equation}\label{e2}
    \log(\Delta^n_jU) - \log(\Delta^n_j v) =  \frac{1}{\xi}(\Delta^n_j U - \Delta^n_j v).
\end{equation}
Substituting \eqref{e2} and \eqref{e3} into \eqref{e1} shows
\begin{equation*}
  \varepsilon^{n+1}_j = \left(\frac{1}{\xi}(\Delta^n_j U - \Delta^n_j V) + \frac{\omega}{ 2\Delta x \rho} \left(v^n_{j+1} - U^n_{j+1} + v^n_{j-1} - U^n_{j-1} \right) \right) \Delta t_n + (U^n_j - V^n_j).
\end{equation*}
To continue we must bound $\xi$ from below by known values. Note that the following inequality holds true.
$$ \xi \ge \min\left\{\min_j\{\Delta^n_j v\},\min_j\{ \Delta^n_j U\}\right\}.$$
If \eqref{eqn: delta x restriction 1} holds, then  $\min_j\{\Delta^n_j v\} \ge \frac{1}{2}\delta_1$. Therefore,
\begin{equation*}
    \frac{1}{\xi} \le \frac{1}{\min\left\{\frac{1}{2}\delta_1,\min\limits_j\{ \Delta^n_j U\}\right\}}.
\end{equation*}
Let us define the quantities
$$r := \frac{\Delta t_n}{ \dxs \min\left\{\frac{1}{2}\delta_1,\min\limits_j\{ \Delta^n_j U\}\right\}}, \qquad s := \frac{\max|g'|\Delta t_n}{2 \min g  \Delta x}.$$
Notice that we can bound $\rho$ from above by $\max|g'|$ and bound $\omega$ from below by $\min g$. Using the definition of the second order difference operator $\Delta^n_j$ and the triangle inequality, we find that
\begin{equation*}
    |\varepsilon^{n+1}_j| \le |r-s||U^n_{j+1} - v^n_{j+1}| + |1-2r||U^n_j - v^n_j|+ |r+s||U^n_{j-1} - v^n_{j-1}|.
\end{equation*}
From the triangle inequality we also know $|U^n_j - v^n_j| \le |\varepsilon^n_j| + |\tau^n_j|$, thus
\begin{equation*}
    |\varepsilon^{n+1}_j| \le |r - s|(|\varepsilon^n_{j+1}| + |\tau^n_{j+1}|) + |1-2r|(|\varepsilon^n_j| + |\tau^n_j|) + |r + s|(|\varepsilon^n_{j-1}| + |\tau^n_{j-1}|).
\end{equation*}
Replacing all quantities of $|\varepsilon^n_j|$ and $|\tau^n_j|$ with their maximum norms
\begin{equation*}
   | \varepsilon^{n+1}_j| \le \left(|r - s| + |1-2r| + |r + s|\right)(||\varepsilon^{n}||_\infty + ||\tau^{n}||_\infty).
\end{equation*}
Now if we choose $\Delta t_n$ and $\Delta x$ such that $s < r \le \frac{1}{2}$, then 
\begin{equation*}\label{s<r}
    \varepsilon^{n+1}_j \le ||\varepsilon^{n}||_\infty + ||\tau^{n}||_\infty.
\end{equation*}
Iterating the inequality over $n$ shows
\begin{equation*}
    ||\varepsilon^{n}||_\infty \leq ||\varepsilon^{0}||_\infty + ||\tau^{n-1}||_\infty + \dots + ||\tau^0||_\infty \quad \text{for all } n \geq 1.
\end{equation*}
Lastly recall that by definition $U^0 - V^0 = 0$ therefore $||\varepsilon^0||_\infty = 0$ and we have
\begin{equation*}\label{tauError}
    ||\varepsilon^{n}||_\infty \leq ||\tau^{n-1}||_\infty + \dots + ||\tau^0||_\infty. 
\end{equation*}
Using the bound for $\tau$ from \eqref{taudef}, we finally get
\begin{equation}
    ||\varepsilon^n||_\infty \leq \sum^{n-1}_{i = 0} \Delta t_i \left( \frac{\Delta t_i K}{2} + \frac{\Delta x^2 \Gamma }{6 \delta_1}+ \frac{\dxs \max |g '| \Psi }{6\min g}  \right).
\end{equation}
\end{proof}

We now have all the tools at hand to prove Theorem \ref{theo: error bound}. 
\begin{proof}[Proof of Theorem \ref{theo: error bound}]

Using the triangle inequality and taking the maximum norm we can say.
\begin{equation*}
    ||U^n_j - v(t_n, x_j)||_\infty \le ||\tau^n_j||_\infty + ||\varepsilon^n_j||_\infty 
\end{equation*}
By Lemmas \ref{tau lemma} and \ref{eps Lemma},
\begin{equation}\label{espErrorAgain}
    ||U^n - v^n||_\infty \leq  \sum_{i=0}^{n-1} \left( \Delta t_{i} \left(\frac{\Delta t_{i}}{2}K+ \frac{\Delta x^2}{6 \delta_1}\Gamma \right) +  \frac{\max |g '|}{\min g} \left( \frac{\dxs}{6} \Psi  \right)  \right).
\end{equation}
\end{proof}


\subsection{Addressing Boundary Conditions}
Due to the Neumann boundary conditions we must analyze error conditions at the boundaries separately. This process is nearly identical as sections \ref{LocalErrorSection} and \ref{totalErrorSection}. Additionally calculations on the left and right boundary are almost identical, and we will just focus on the left boundary point.

We begin with our calculation of the local error. Using the same method of section 3.1, the error term from (\ref{eqn: vt taylor}), and (\ref{Cerror}) gives us
\begin{equation*}
    \tau ^n_0 = \Delta t_n\left( -\frac{\Delta t_n}{2}(v_{tt}(t_n + \kappa, 0)) +  \frac{1}{\lambda}\left(\frac{\Delta x}{3} (v_{xxx}(t_n, \psi) \right) + \frac{\chi }{\mu} \left(\frac{\Delta x^2}{6} v_{xxx}(t_n, x_j + \gamma)\right) \right),
\end{equation*}
with $\lambda$ being some point between $\Delta ^n_0 v$ and $v_{xx}(t_n, 0)$.
We assume $\Delta x$ in the boundary case must also satisfy $\frac{1}{3}|v_{xxx}|\Delta x \le \frac{1}{2}v_{xx}$, which leads to
\begin{equation*}
    \min_{j}\left\{\Delta^n_jv\right\} \ge \frac{1}{2}\min_{x,t}\{v_{xx}\}.
\end{equation*}
This implies that $\lambda < \frac{1}{2} \delta_1$ by similar methods as used before, and results in a bound on $\tau^n_j$ at boundary point $a$ 
\begin{equation*}
    |\tau ^n_0| \leq \Delta t_n \left(\frac{\Delta t_n}{2}K +  \frac{2 \Delta x}{3 (\delta_1)}\Psi + \frac{\max |g '|}{\min g} \left( \frac{\dxs}{6} \Psi  \right) \right).
\end{equation*}
Now calculating our total error at the boundary begins the same way as in section 3.2. If we let $r$ and $s$ be the same quantities then, by use of a similar log approximation as well as the triangle inequality 
\begin{equation*}
    |\varepsilon^n_0| \leq 
    |r + s||U^{n -1}_{1} - V^{n - 1}_{1}|
      + |1-r - s||U^{n - 1}_0 + V^{n - 1}_0|.    
\end{equation*}
We can now replace $|U^n_0 - V^n_0|$ terms with $|\varepsilon^n_0| + |\tau^n_0|$ terms by another triangle inequality
\begin{equation*}
    |\varepsilon^n_0| \leq 
    |r + s|(| \varepsilon^{n - 1}_{1}| + |\tau^{n - 1}_{1}|) +|1 - r - s|(| \varepsilon^{n - 1}_0| +|\tau^n_0|).
\end{equation*}
If $s < r \le \frac{1}{2}$ as was necessary for \eqref{s<r} then it is implied that $ r + s < 1$. Taking maximums over $j$ in $\{0, 1\}$ gives us
\begin{equation*}
    |\varepsilon^n_0| \leq \max_{j}\{\tau^{n - 1}_j\} + ... + \max_{j}\{\tau^{0}_j\}.
\end{equation*}
This allows us to show a final bound on the boundary conditions,
\begin{equation}\label{eqn: boundary}
    |U^n_0 - v^n_{0}| \leq \sum_{i=0}^{n-1} \left(\Delta t_i \left(\frac{\Delta t_i}{2}K +  \frac{\Delta x}{3 \delta_1}\Psi +  \frac{\max |g '|}{\min g} \left( \frac{\dxs}{6} \Psi  \right)  \right)\right).
\end{equation}

\section{Asymptotic Error Analysis}\label{sec:code}
To ensure that our implementation of the code provides an accurate numerical approximation of the optimal map $T(x) = u'(x)$, where $u$ solves \eqref{OT}, we must show that  $\max_j|\na U - u'(x_j)|$ is within a desired tolerance. In this section we show the error between \eqref{OT} and \eqref{finitedifference} is controlled by a quantity that can be calculated at each time step. 

Let $\O \subset [A,B]$ and let $S: \O \to [C,D]$ be an increasing function. Recall the definition of the cumulative distribution functions $F$ and $G$ in \eqref{CDF}. Define the error function of $S$ as
$$E(S,x) := |F(x) - G(S(x))|, \quad x \in \O.$$
Notice that if $T$ is the optimal map between $f$ and $g$, then $E(T,x) = 0$ for all $x \in [A,B]$. Next, we see that for any $x \in \O$,
$$E(S,x) = |F(x) - G(S(x))| = |G(T(x)) - G(S(x))| \geq \left(\min_{[C,D]} g \right)|T(x) - S(x)|.$$
Therefore,
$$\max_{x \in \O} |T(x) - S(x)| \leq \frac{\max_{x \in \O} E(S,x)}{\min_{[C,D]} g}.$$
It follows that for any $\sigma > 0$,
\begin{equation}\label{asymptoticerrorbound}
\max_{x \in \O} E(S,x) \leq \sigma \min_{[C,D]} g \quad \Rightarrow \quad \max_{x \in \O} |T(x) - S(x)| \leq \sigma.
\end{equation}

\T
Let $T(x) = u'(x)$ be the optimal map, where u solves \eqref{OT}. Asumming $\Delta^n_j U$ is strictly positive for all $n$. Given a tolerance $\sigma > 0$.
\begin{equation}
\max_{j=0,\ldots,J} E(\nabla^n_j U,x_j) \leq \sigma \quad \Rightarrow \max_{j=0,\ldots,J} |T(x_j) - \nabla^n_j U| \leq \frac{\sigma}{\min_{[C,D]} g} 
\end{equation}
\TT
\begin{proof}

We apply the estimate \eqref{asymptoticerrorbound} to the finite-difference scheme \eqref{finitedifference}. Let $\O = \{x_0,\ldots, x_J \} \subset [A,B]$ be the set of spatial grid points. For each $n \in \N$, denote the map $S_n : \O \to [C,D]$ as
$$S_n(x_j) := \nabla^n_j U, \quad x_j \in \O.$$
Therefore, we have $S_n(x_0) = C$ and $S_n(x_J) = D$. We check that $S_n$ is an increasing function on $\O$, and hence maps into $[C,D]$. This is a condition of the optimal map as implied by \eqref{OT}. 

Assume $j \in \{1, \ldots J \}$. Then
\begin{align*}
S_n(x_j) - S_n(x_{j-1}) & = \nabla^n_j U - \nabla^n_{j-1} U \\
& = \left(\frac{U^n_{j+1} - U^n_{j-1}}{2\Delta x} \right) -  \left(\frac{U^n_{j} - U^n_{j-2}}{2\Delta x} \right) \\
& = \frac{\Delta x}{2} \left[ \left(\frac{U^n_{j+1} - U^n_{j-1}}{\Delta x^2} \right) -  \left(\frac{U^n_{j} - U^n_{j-2}}{\Delta x^2} \right) \right]\\
& = \frac{\Delta x}{2} \left[ \left(\frac{U^n_{j+1} + U^n_{j-1} - 2U^n_j - U^n_{j-1} + 2U^n_j - U^n_{j-1}}{\Delta x^2} \right) -  \left(\frac{U^n_{j} - U^n_{j-2}}{\Delta x^2} \right) \right]\\
& = \frac{\Delta x}{2} \left[ \left(\frac{U^n_{j+1} + U^n_{j-1} - 2U^n_j}{\Delta x^2} \right) + \left(\frac{ - U^n_{j-1} + 2U^n_j - U^n_{j-1} -U^n_{j} + U^n_{j-2}}{\Delta x^2} \right) \right]\\
& = \frac{\Delta x}{2} \left[ \left(\frac{U^n_{j+1} + U^n_{j-1} - 2U^n_j}{\Delta x^2} \right) + \left(\frac{U^n_{j} + U^n_{j-2} - 2U^n_{j-1}}{\Delta x^2} \right) \right]\\
& =  \frac{\Delta x}{2} \left( \Delta^n_j U + \Delta^n_{j-1} U \right) \geq 0.
\end{align*}
\end{proof}

Notice that the Theorem above requires the condition $\Delta^n_j U \geq 0$ for all $j = 0, \ldots, J$. We note that in practice, if the code does not encounter a domain error at time step $n+1$, then $\Delta^n_j U > 0$ for all $j$. The theorem above also shows that given a tolerance $\sigma > 0$, if there exists some $N(\sigma) \in \N$ such that $$\max_{j = 0,\ldots J} E(\nabla^{N(\sigma)}_j U,x_j) \leq \sigma$$
then
\begin{equation}\label{OTconverge}
    \max_{j=0,\ldots,J} |T(x_j) - \nabla^{N(\sigma)}_j U| \leq \frac{\sigma}{\min_{[C,D]} g}.
\end{equation}
Since the quantity $\max_j E(\nabla^n_j U, x_j)$ can be computed at each time step $n$, we can run our code to the time step $n = N(\sigma)$ for which  $\max_j E(\nabla^n_j U, x_j)$ is less than a specified tolerance $\sigma$. We are not able to guarantee our scheme will always be able to reach such a specified tolerance in a finite number of steps, but if this tolerance is reached, then we can conclude using \eqref{OTconverge} that the map $\nabla^n_j U$ is equal to the optimal map on the grid points $x_1, \ldots, x_J$ up to a quantifiable error.
It should be noted that in practice it is often simpler to use numerical integration to evaluate $E(\nabla^n_j U, x_j)$. Therefore, \eqref{OTconverge} will hold up to the accuracy of the numerical integration method used.

\section{Computational Examples and Results}\label{sec: examples}

This section is dedicated to describing the code used for implementing \eqref{finitedifference} and certain relevant numerical examples computed using this code. We first note that empirically when $\Delta t$ and $\Delta x$ are chosen to satisfy \ref{errorConditions} then $\Delta^n_j U$ stays above $\frac{1}{2} \delta_1$. Additionally from \cite{Kitagawa12}, we know that \eqref{POT} converges exponentially to the actual solution of the optimal transport problem. In the following examples this fast convergence can be observed as the results are graphed over time using a uniformly spaced color gradient. Exponential convergence is observed due to the relatively small change in approximation at later time steps.  

In testing our code for functionality, we attempted to cover a variety of situations using appropriate choices of $f$ and $g$. Some of the more interesting cases tested have been shown here. For simplicity all cases were run with initial choice $u_0(x)=\frac{1}{2}x^2$. 
We chose not to graph the function $U^n_j$, as the function that is relevant for the optimal transport theory is $\nabla^n_j U$, which is meant to approximate the function $u'(x)$ for $u$ solving \eqref{OT}. We note that the theory for \eqref{POT} only guarantees convergence to the solution of \eqref{OT} when $f(x)$, $g(y)$  are continuous and bounded away from zero and infinity on $[A,B]$ and $[C,D]$, respectively. Some of our numerical examples test the limits of the theory by considering cases where $g$ is only piece-wise continuous and also where $f$ gets very close to zero. 

Before discussing the examples we will briefly discuss the algorithm. The full implementation in python is available at \url{https://github.com/manuelarturosantana/ParabolicOptimalTransport}

 \subsection{Algorithm}
 
\begin{algorithm}
\KwResult{Returns the Approximated Solution of OT }
 Calculate $\Delta x, \Delta t_0, \delta_1, \delta_2, \Psi, K, \Gamma$\\
  current row = initial row based off $v_0$\\
 \While{Max Error $>$ Tolerance}{
  Calculate $\Delta t_n$\\
  current row = calculate next row\\
  Calculate Max Error
 }
 \caption{Simple Finite Difference Algorithm}
\end{algorithm}

Calculating each row follows the finite difference scheme \eqref{finitedifference}. The boundaries and the interior points are calculated separately. Checking the error at every grid point for every time-step is computationally expensive. To combat this, a subset of the spatial grid points, which we denote by $\mathcal{G}_x$, is selected, and at each time step only grid points in $\mathcal{G}_x$ are tested to be within tolerance. If at a certain time step all grid points in $\mathcal{G}_x$ are within tolerance, we proceed by calculating the error at \emph{all} spatial grid points from that time step onward until tolerance is reached at every grid point. 


We now describe several computational examples. In examples \ref{simple}, \ref{hiFreqEx}, \ref{otherHiFreqEx}, \ref{920Ex}, \ref{PieceWiseEx} the domains $[A,B] = [C, D] = [-1,1]$. In \ref{QuantileEx} $[A,B] = [0,1]$ and $[C,D] = [-\pi, \pi].$ The graphs with color maps show the solution plotted every $1000$ iterations. In the tables below $t_{total}$ represents the value of $t$ in \ref{POT} at the final time-step, and CPU Time represents the computer run time in seconds. All were run on the CoCalc cloud computing environment with the academic research package. 

\subsection{`Nice' Functions}
In testing our code we tried an initial variety of computationally nice functions for both $f$ and $g$. These functions are bounded well above 0 ($>0.1$), continuous, and did not change convexity more than twice. Such examples include logarithmic, exponential, linear, quadratic, constant, and concave cosine functions all modified to fit the conditions of \eqref{POT}.  The following is an example of numerical output using two functions from this set:
\begin{example}\label{simple}
\begin{equation*}
    f(x) = \frac{\log(x+2)}{3\log(3)+ 2} + \frac{2}{3\log(3) + 2}, \quad g(x) = \frac{1}{2}x^2 + \frac{1}{3}
\end{equation*}
\end{example}

\begin{figure}[H]
    \centering
    \begin{subfigure}{.5\linewidth}
  \centering
  \includegraphics[width = .95\linewidth]{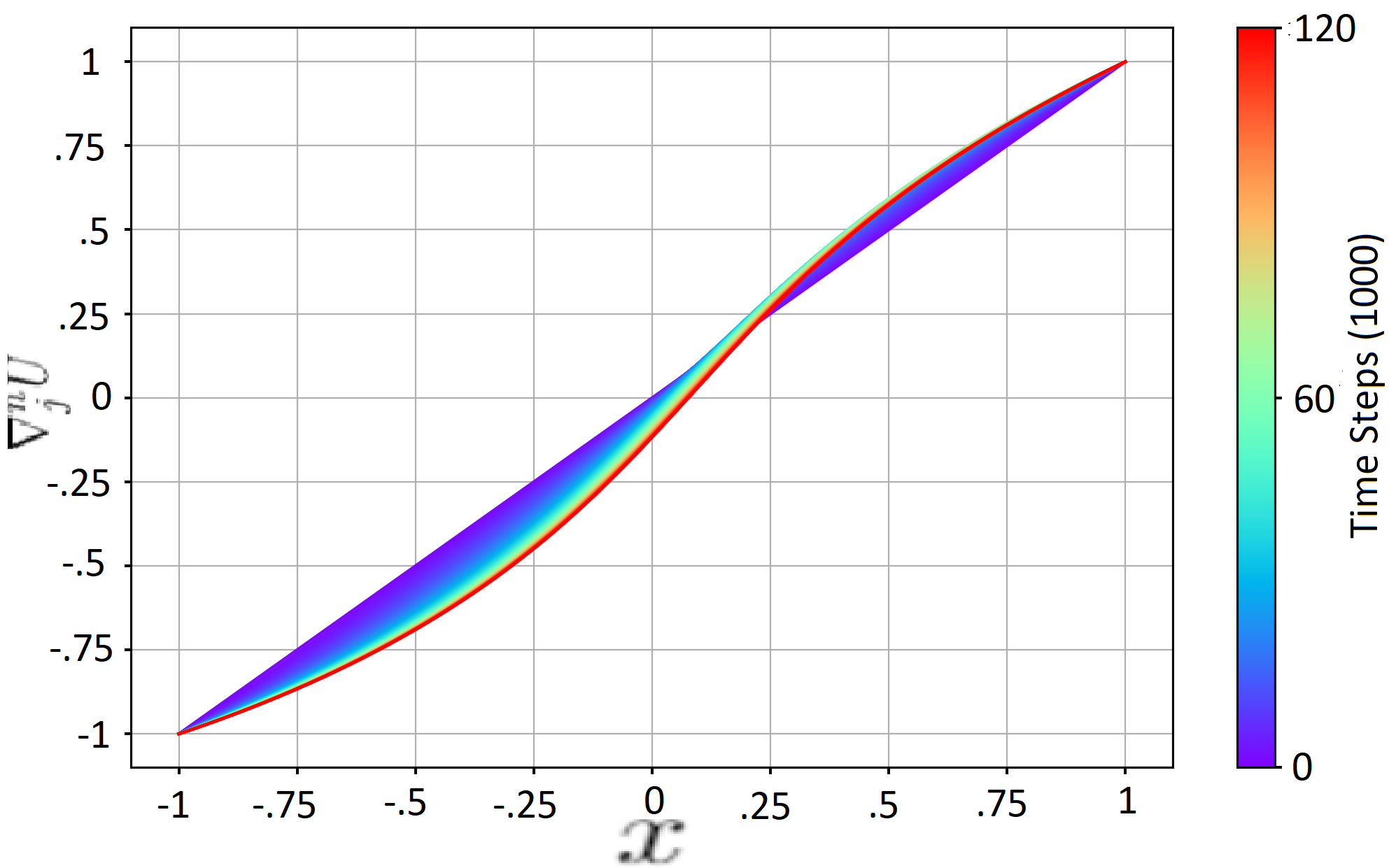}
  \caption{$\na U$ Over Time}
\end{subfigure}%
\begin{subfigure}{.5\linewidth}
  \centering
  \includegraphics[width = .95\linewidth]{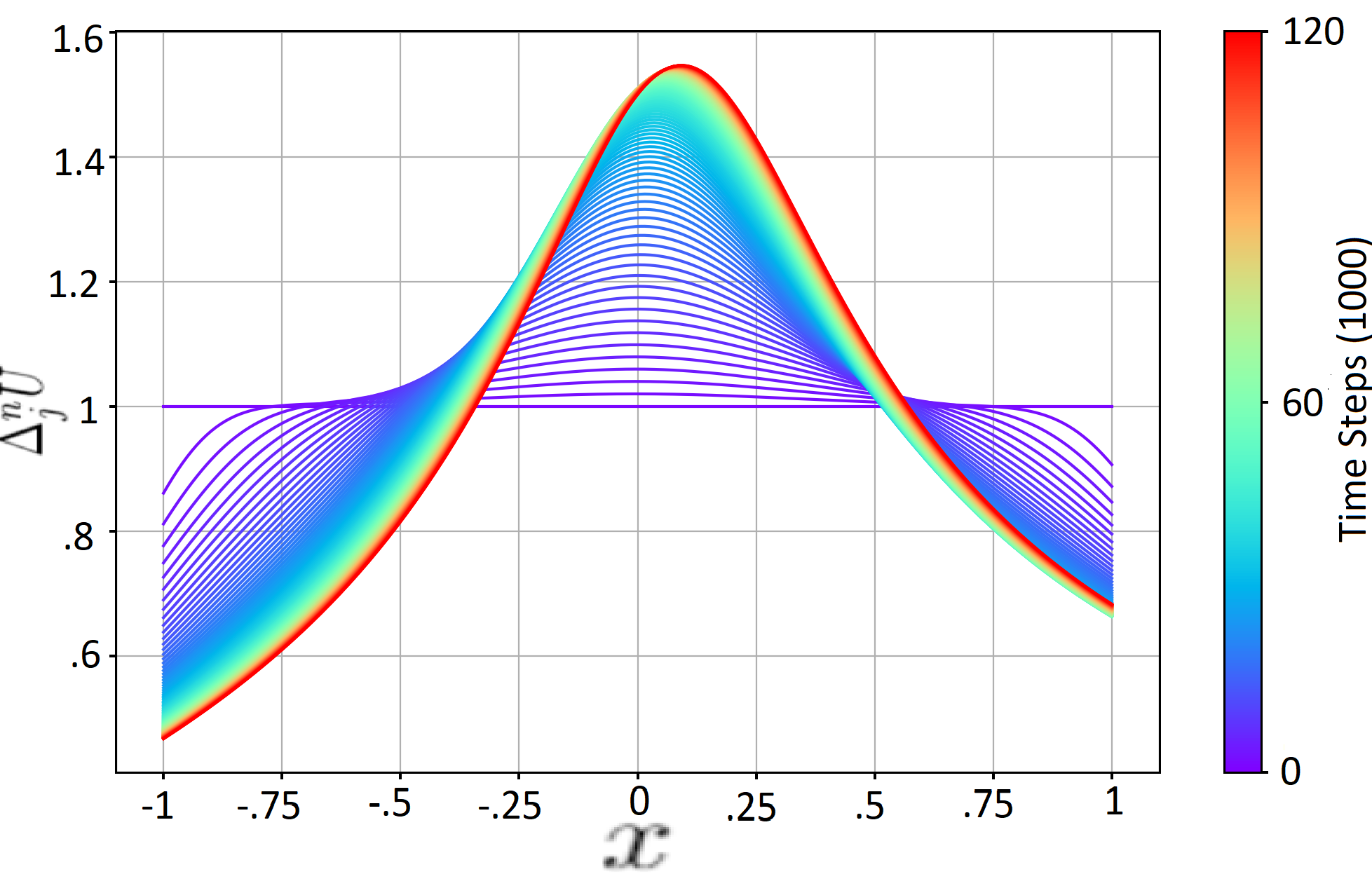}
  \caption{$\Delta^n_j U$ Over Time}
\end{subfigure}
    \caption{Graphs of \ref{simple}, 152.9s to Reach Tolerance \\
    $\epsilon = 0.01, \quad \max\limits_{j=0,\ldots,J} |T(x_j) - \nabla^n_j U| \leq  0.03$}
    \label{fig:simple}
\end{figure}

\begin{table}[H]
    \centering
    \begin{tabular}{||c|c|c|c||}
      \hline
    Tolerance & Iterations & $t_{total}$ & CPU Time (s) \\
     \hline \hline
     0.1&814&0.0053&1.06 \\
     \hline
     0.01&119880&0.7815&150 \\
     \hline
     0.001&289020&1.88&360 \\
     \hline
     0.0001&459807&2.997&573 \\
     \hline
    \end{tabular}
    \caption{The computational time and iterations to reach tolerance. The sum of time steps is also given.}
    \label{tab:simple}
\end{table}
As expected the graph in Figure \ref{fig:simple} shows exponential convergence to the optimal map.

\subsection{High Frequency Functions}
\begin{example}\label{hiFreqEx}
\begin{equation*}
    f(x) = \frac{50}{\sin(100)+ 200}(\cos(100x)+2), \quad g(x) = \frac{1}{4} (x+2)
\end{equation*}
\end{example}
This case has frequent convexity changes of the initial mass function, $f(x)$. Testing this case allows us to know that our our code is able to handle more complex smooth cases. 

\begin{figure}[H]
\begin{subfigure}{.5\linewidth}
  \centering
  \includegraphics[width = .95\linewidth]{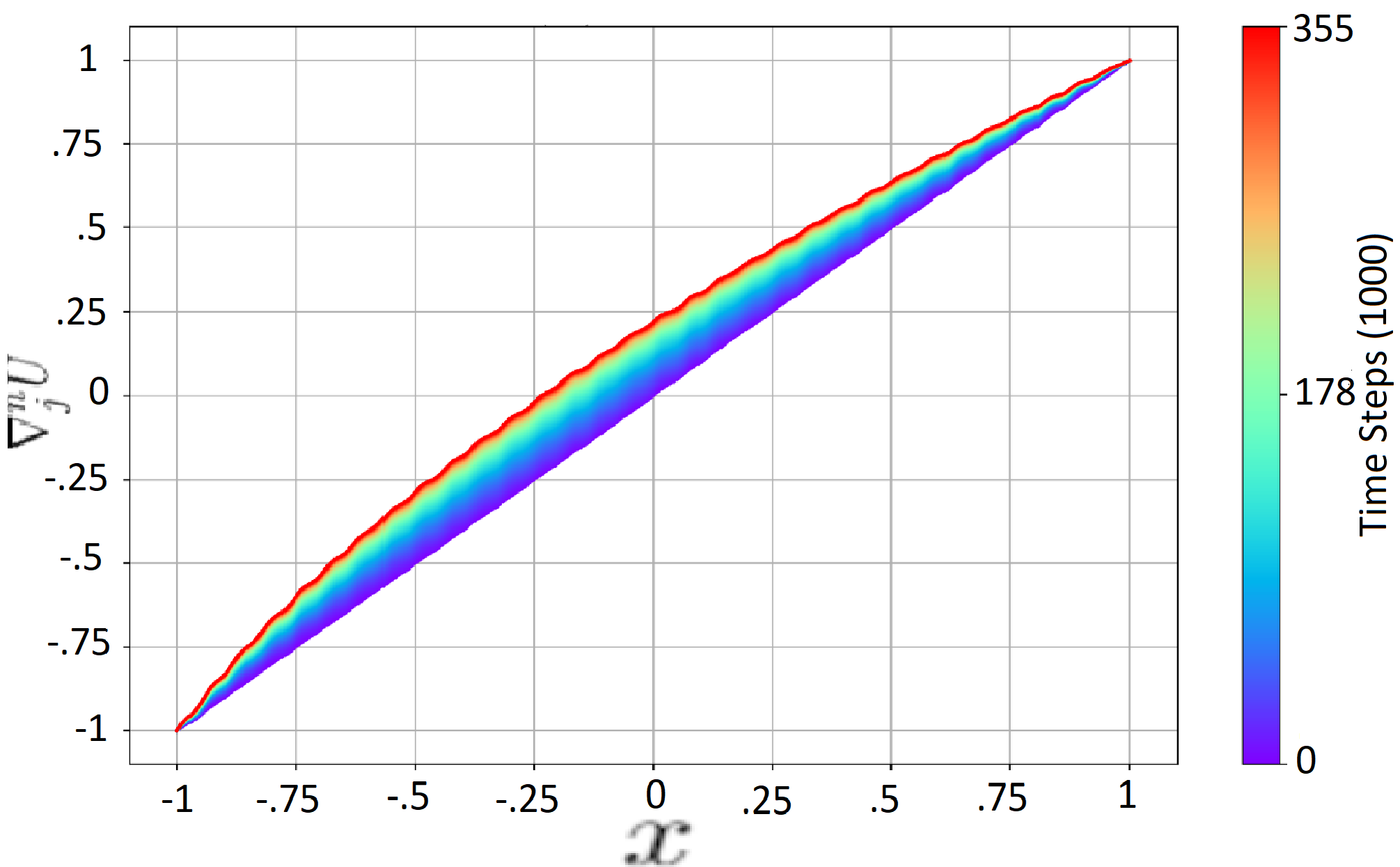}
  \caption{$\na U$ Over Time}
\end{subfigure}%
\begin{subfigure}{.5\linewidth}
  \centering
  \includegraphics[width = .95\linewidth]{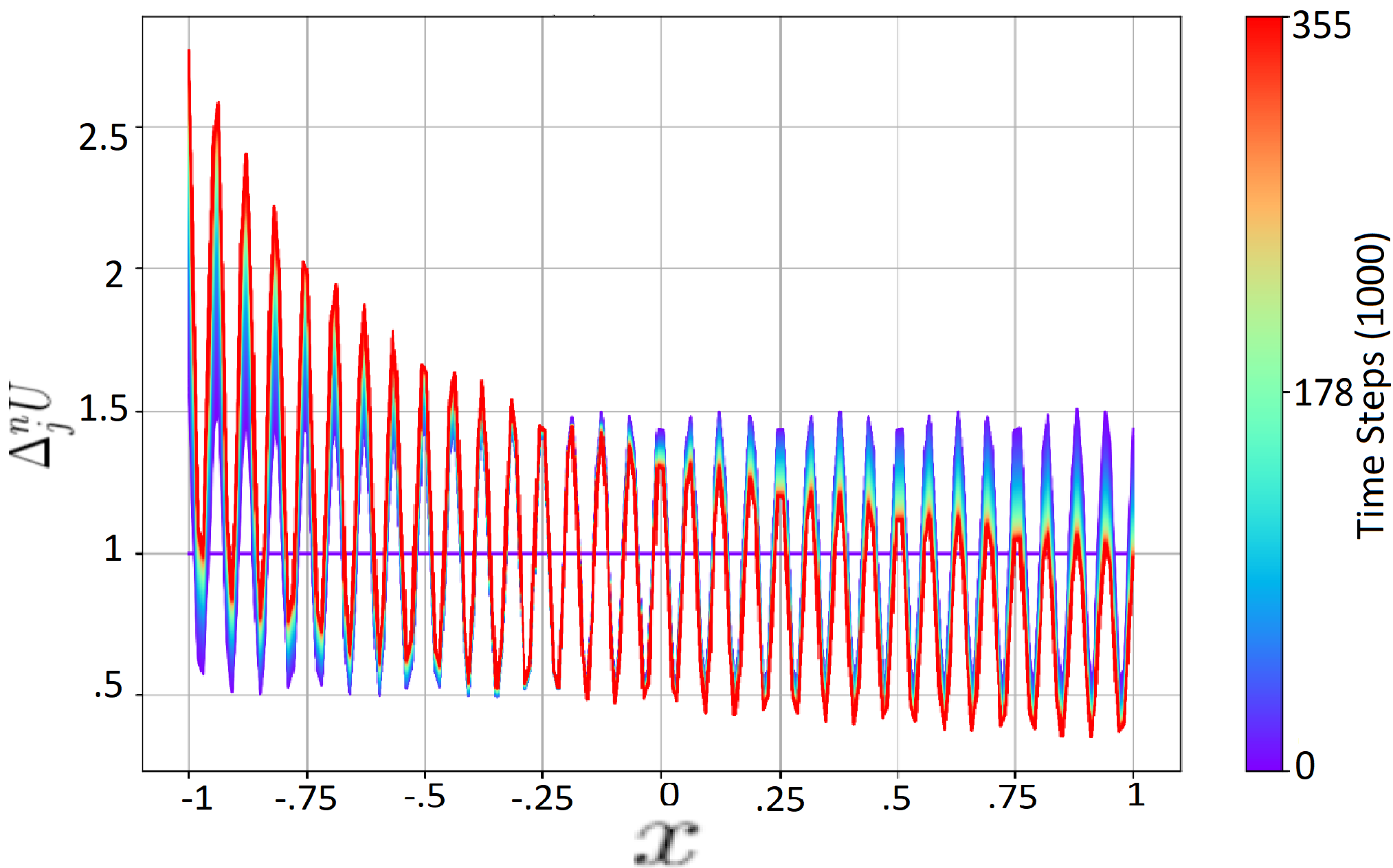}
  \caption{$\Delta^n_j U$ Over Time}
\end{subfigure}
\caption{Graphs of \ref{hiFreqEx}, 430s to Reach Tolerance \\
$\epsilon = 0.01, \quad \max\limits_{j=0,\ldots,J} |T(x_j) - \nabla^n_j U| \leq  0.04$}
\label{fig:hiFreq}
\end{figure}

\begin{table}[H]
    \centering
    \begin{tabular}{||c|c|c|c||}
      \hline
    Tolerance & Iterations & $t_{total}$ & CPU Time(s) \\
     \hline \hline
     0.1&35877&0.1006&41.31 \\
     \hline
     0.01&354534&0.9947&414 \\
     \hline
     0.001&692928&1.944&817\\
     \hline
    \end{tabular}
    \caption{Numerics for high frequency function to quadratic.}
    \label{tab:hiFreq}
\end{table}

\begin{example}\label{otherHiFreqEx}
\begin{equation*}
    f(x) = \frac{1}{4} (x+2), \quad g(x) = \frac{50}{\sin(100)+ 200}(\cos(100x)+2)
\end{equation*}
\end{example}
This example switches $f(x)$ and $g(x)$ in Example \ref{hiFreqEx}. According to the optimal transport theory, the corresponding optimal map will be the inverse of the optimal map from Example \ref{hiFreqEx}. We also expected the runtime to be longer in this case, due to the high oscillation in the term $g(\nabla^n_j U)$ from \eqref{finitedifference}. 
\begin{figure}[H]
\begin{subfigure}{.5\linewidth}
  \centering
  \includegraphics[width = .95\linewidth]{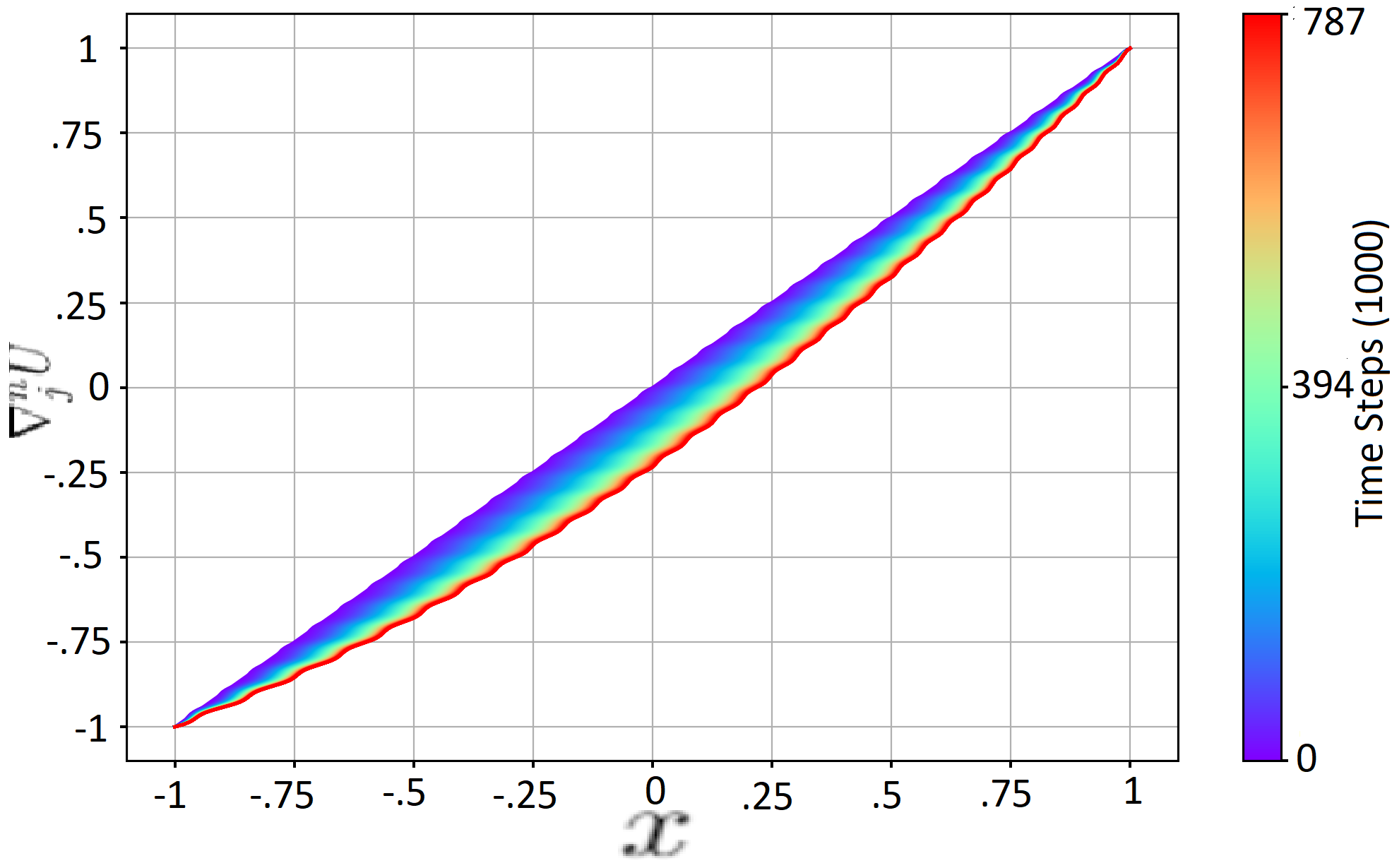}
  \caption{$\na U$ Over Time}
\end{subfigure}%
\begin{subfigure}{.5\linewidth}
  \centering
  \includegraphics[width = .95\linewidth]{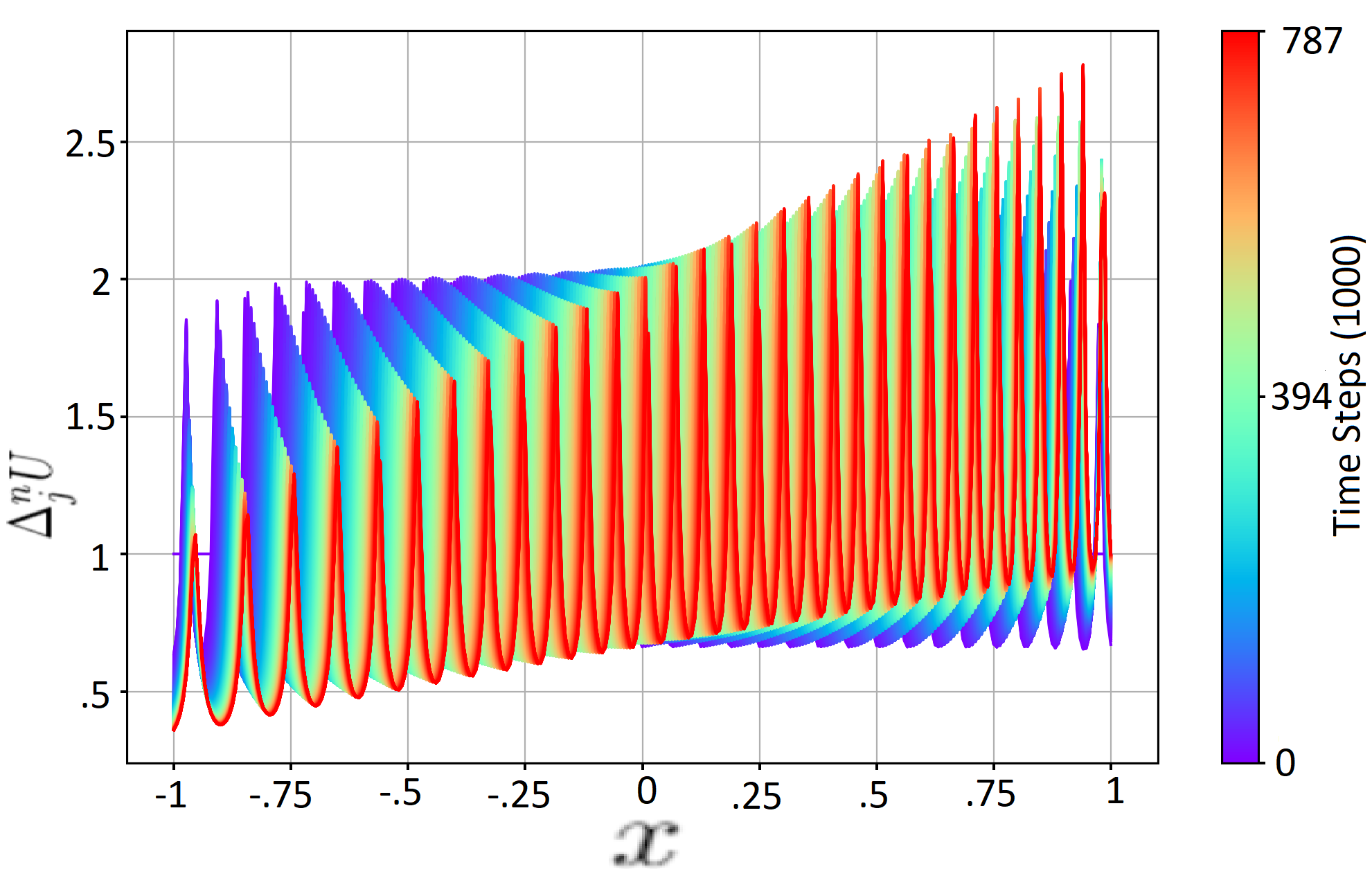}
  \caption{$\Delta^n_j U$ Over Time}
\end{subfigure}
\caption{Graphs of \ref{hiFreqEx}, 5195s to Reach Tolerance}
\label{fig:otherHiFreq}
\end{figure}

\begin{table}[H]
    \centering
    \begin{tabular}{||c|c|c|c||}
      \hline
    Tolerance & Iterations & $t_{total}$ & CPU Time(s) \\
     \hline \hline
     0.1& 83180 & 0.1001 & 596\\
     \hline
     0.01&786271&0.9454 & 5195\\
     \hline
     0.001 & 1254499 & 1.509 & 8635\\
     \hline
    \end{tabular}
    \caption{Numerics for quadratic function to high frequency.}
    \label{tab:hiFreqother}
\end{table}

As expected, this example required more computational time and iterations to reach tolerance. Furthermore, we observe that the graph of $\nabla^n_j U$ in (Figure \ref{fig:otherHiFreq}(a)) is the inverse of the graph of $\nabla^n_j U$ in (Figure \ref{fig:hiFreq}(a)), which is predicted by the optimal transport theory. Due to the large differences in computational time, it would likely be more efficient to let the initial mass distribution function $f$ be the more complicated one, and then computing the inverse of the optimal map between $f$ and $g$ if that is what one needs. However, it is worth keeping in mind the limitations of inverting a grid function, as the inverse is not necessarily defined on a well distributed set of grid points. This is illustrated in the next example. 

\subsection{Mapping Functions That Are Close To Zero}
\begin{example}\label{920Ex}
$$f(x) = \frac{9}{20}x +\frac{1}{2}, \quad g(x) = \frac{1}{2}$$
\end{example}
This is a case where the minimum of the initial mass distribution, $f(x)$, is close to 0. Although the theory implies that any smooth function bounded away from 0 will work for $f$ and $g$, cases such as this cause the error to become large and the code to fail unless we incorporate the error conditions \eqref{errorConditions} into our code. After the conditions \eqref{errorConditions} were properly incorporated into our code, we found that the finite difference scheme is able to converge within tolerance to the real solution of \eqref{OT}. 


\begin{figure}[H]
    \centering
    \begin{subfigure}{.5\linewidth}
      \includegraphics[width = .95\linewidth]{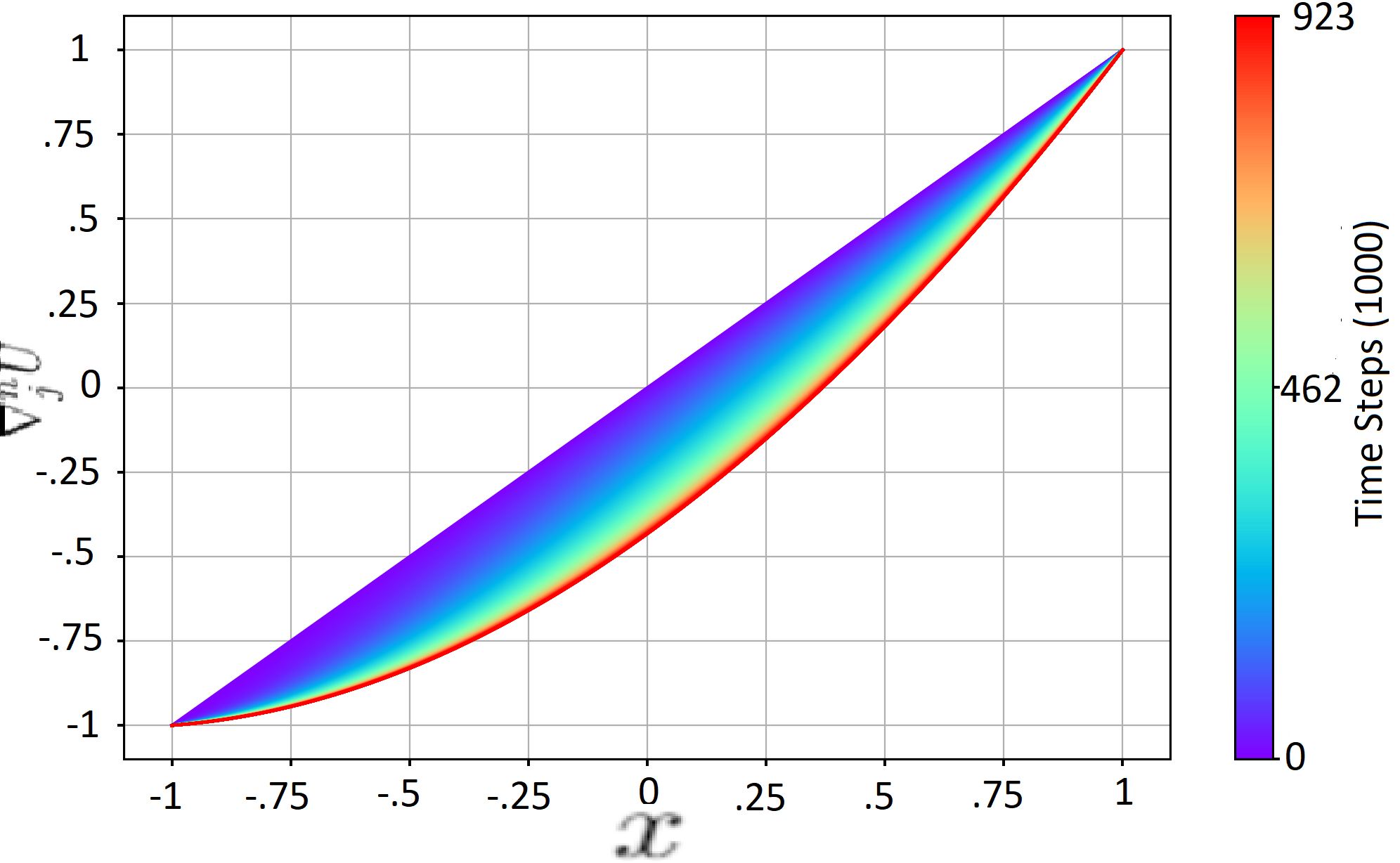}
      \caption{$\na U$ Over Time}
    \end{subfigure}%
    \begin{subfigure}{.5\linewidth}
      \includegraphics[width = .95\linewidth]{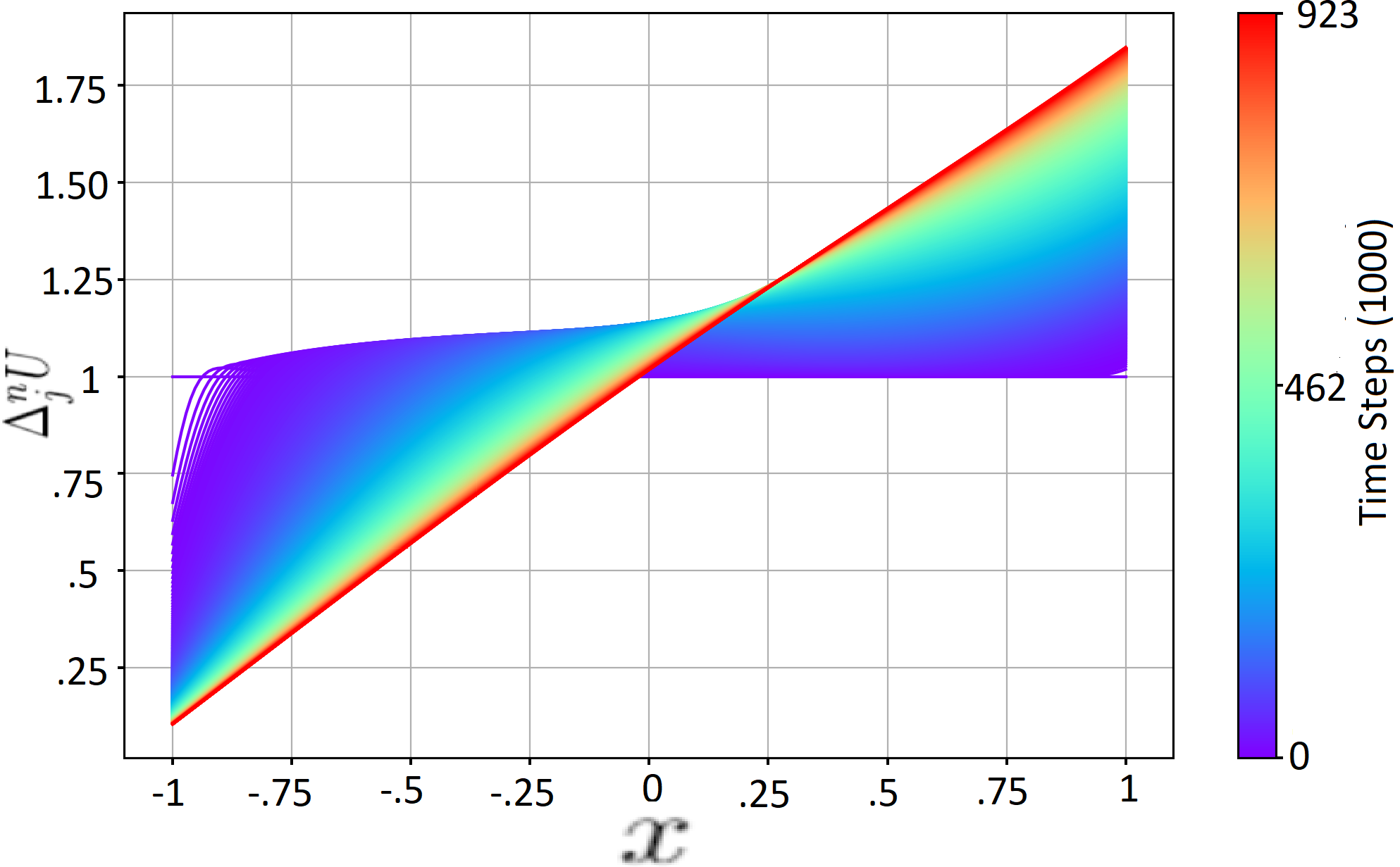}
      \caption{$\Delta^n_j U$ Over Time}
    \end{subfigure}
    \caption{Graphs of \ref{920Ex}, 985.7s to Reach Tolerance \\
    $\epsilon = 0.01, \quad \max\limits_{j} |T(x_j) - \nabla^n_j U| \leq  0.02$}
    \label{fig:920th}
\end{figure}

\begin{table}[H]
    \centering
    \begin{tabular}{||c|c|c|c||}
      \hline
    Tolerance & Iterations & $t_{total}$ & CPU Time (s) \\
     \hline \hline
     0.1&263683&0.3087&280 \\
     \hline
     0.01&922803&1.0806&975 \\
     \hline
     0.001&1568238&1.8364&1653\\
     \hline
     0.0001&2212321&2.59074&2326 \\
     \hline
    \end{tabular}
    \caption{Numerics for Near Zero Function}
    \label{tab:near0}
\end{table}

Now consider the more general case of \eqref{920Ex}; $f : [-1, 1] \to \mathbb{R}$ such that $f(x) = \beta x + \frac{1}{2}$. If $\beta$ is very close to $\frac{1}{2}$ then $\min f$ is very close to zero. It follows from Theorem \ref{theo: error bound} that $\Delta t$ gets very close to zero. In practice this has meant millions of iterations to reach convergence and a very slow program.

\subsection{Mapping Piecewise Functions}
In this section we discuss results involving mass distributions that are not guaranteed by \eqref{POT} to converge to the solution of \eqref{OT}. Yet, experimentally, with our finite difference scheme, we were able to show for some of these examples that the numerical solution can approach a desired tolerance and hence is close to the solution of \eqref{OT}. The following example uses a piecewise constant function to show this.  

\begin{example}\label{PieceWiseEx}
$$f(x) = \begin{cases}
0.3, \quad \text{if } x \le -0.5 \\
0.6, \quad \text{if } -0.5 < x \le 0 \\
0.2, \quad \text{if } 0 < x \le 0.5 \\
0.9, \quad \text{if } x > 0.5
\end{cases}, \quad g(x) =  \frac{1}{2}$$
\end{example}

\begin{figure}[H]
    \centering
    \begin{subfigure}{.5\linewidth}
      \includegraphics[width = .95\linewidth]{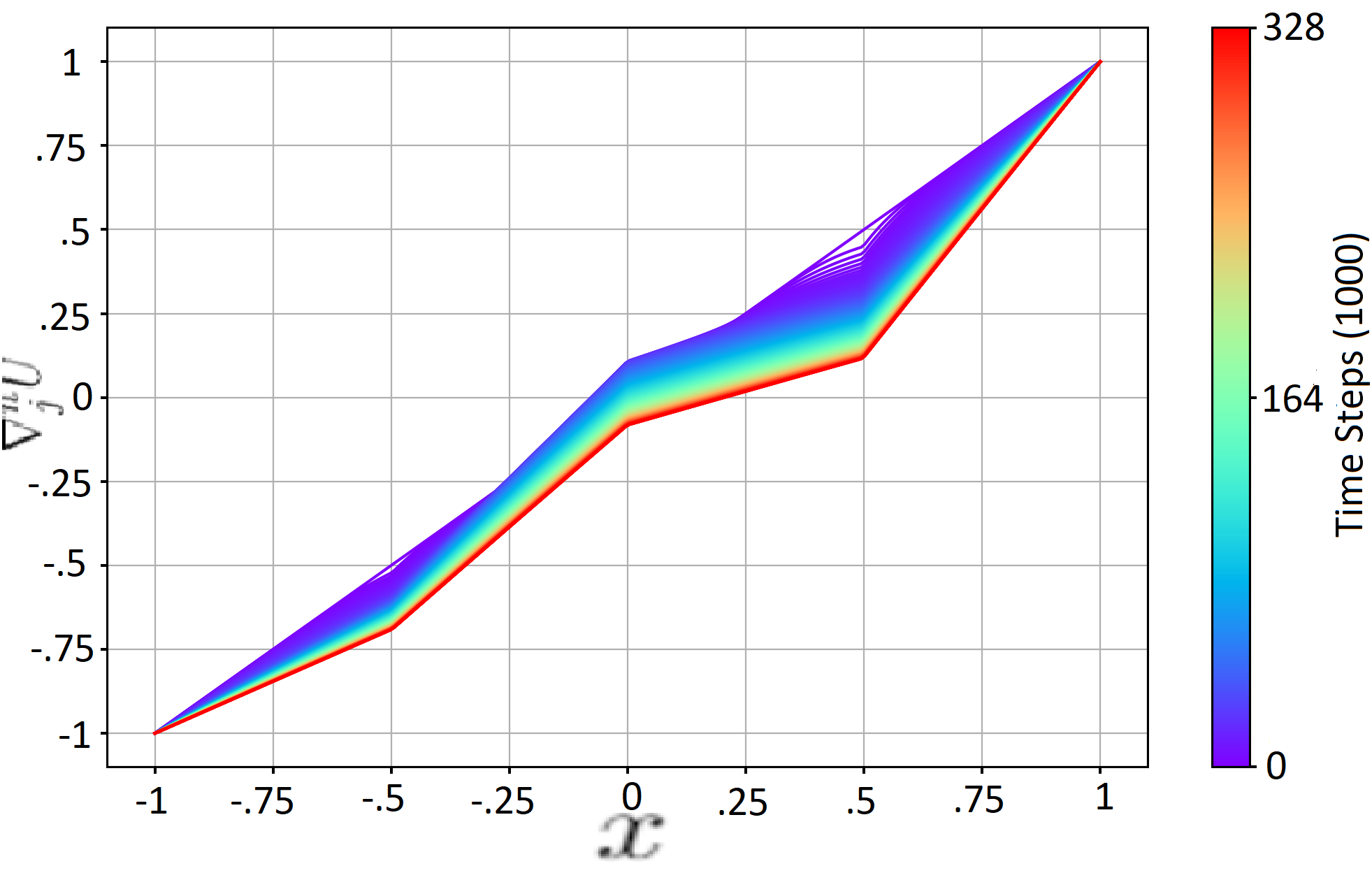}
      \caption{$\na U$ Over Time}
    \end{subfigure}%
    \begin{subfigure}{.5\linewidth}
      \includegraphics[width = .95\linewidth]{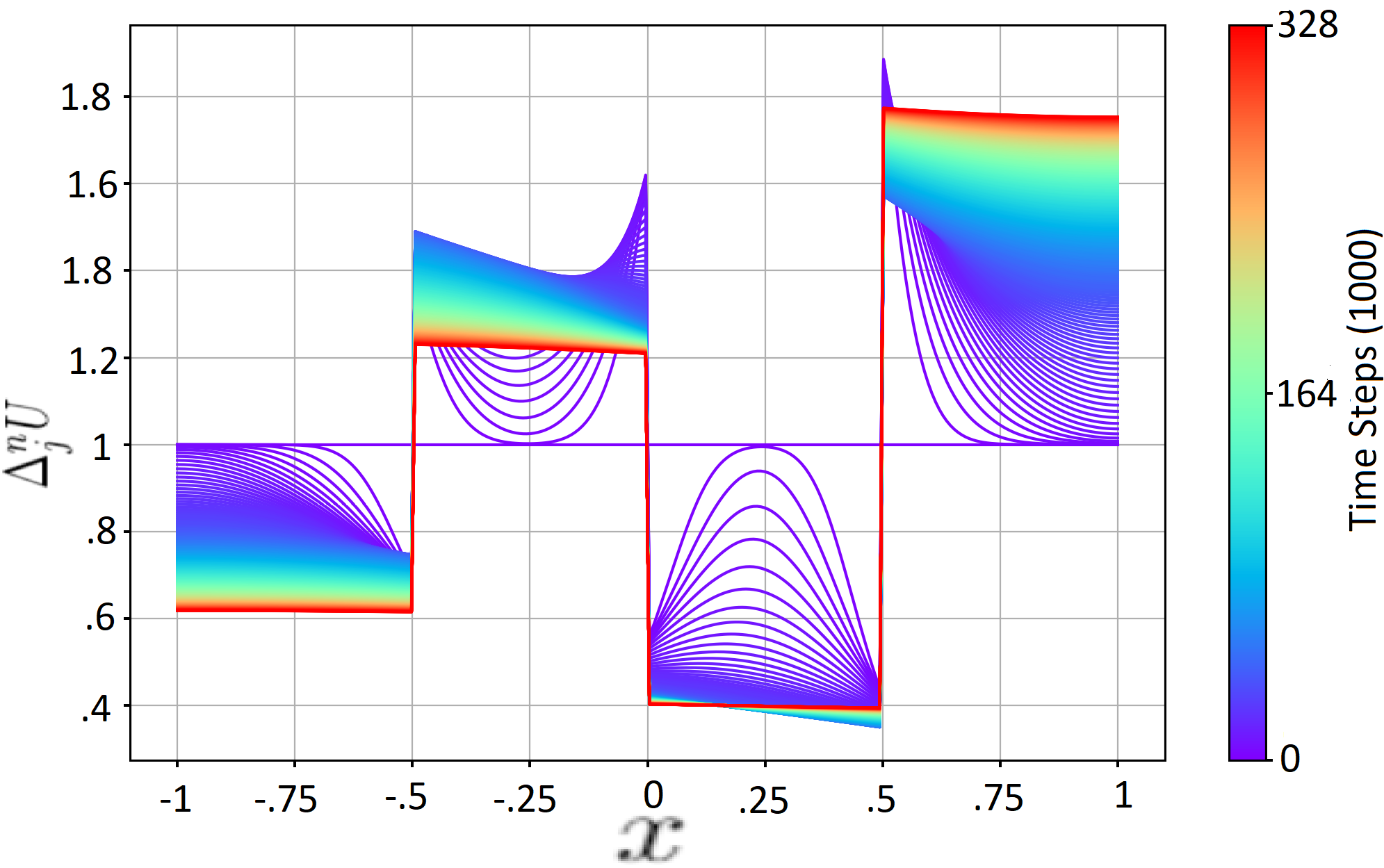}
      \caption{$\Delta^n_j U$ Over Time}
    \end{subfigure}
    \caption{Graphs of \ref{PieceWiseEx}, 1211.4s to reach tolerance \\
    $\epsilon = 0.01, \quad \max\limits_{j} |T(x_j) - \nabla^n_j U| \leq  0.021$}
    \label{fig:pieces}
\end{figure}

\begin{table}[H]
    \centering
    \begin{tabular}{||c|c|c|c||}
      \hline
    Tolerance & Iterations & $t_{total}$ & CPU Time (s) \\
     \hline \hline
     0.1&31325&0.1088&38.47 \\
     \hline
     0.01 & 327070 & 1.1356 & 400 \\
     \hline
     0.001 & 672739 & 2.3356 & 821\\
     \hline
    \end{tabular}
    \caption{Numerics for Piecewise Function}
    \label{tab:piecesTab}
\end{table}
We were also motivated to test whether a piecewise function would be able to converge within tolerance when mapped to another piecewise function, with discontinuities at different points. Furthermore, we wanted to see the effects of functions that were not piecewise constant. This led us to test the following example:
\begin{example}\label{morePieces}\begin{equation*}
    f(x) = \begin{cases}
\frac{1}{2}\log(x+2) + \frac{13}{12} - 2\log(2), &\text{if } x \le 0 \\
\frac{1}{4}x^2 + \frac{1}{3},  &\text{if } x > 0
\end{cases}, \quad 
g(x) = \begin{cases}
\frac{3}{10}x + \frac{7}{10}, \quad &\text{if } x \le \frac{-1}{3} \\
\frac{1}{2}, \quad &\text{if } \frac{-1}{3} < x \le \frac{1}{3}\\
\frac{-3}{10}x + \frac{7}{10}, \quad &\text{if } \frac{1}{3} < x
\end{cases}
\end{equation*}
\end{example}

\begin{figure}[H]
    \centering
    \begin{subfigure}{.5\linewidth}
      \includegraphics[width = .95\linewidth]{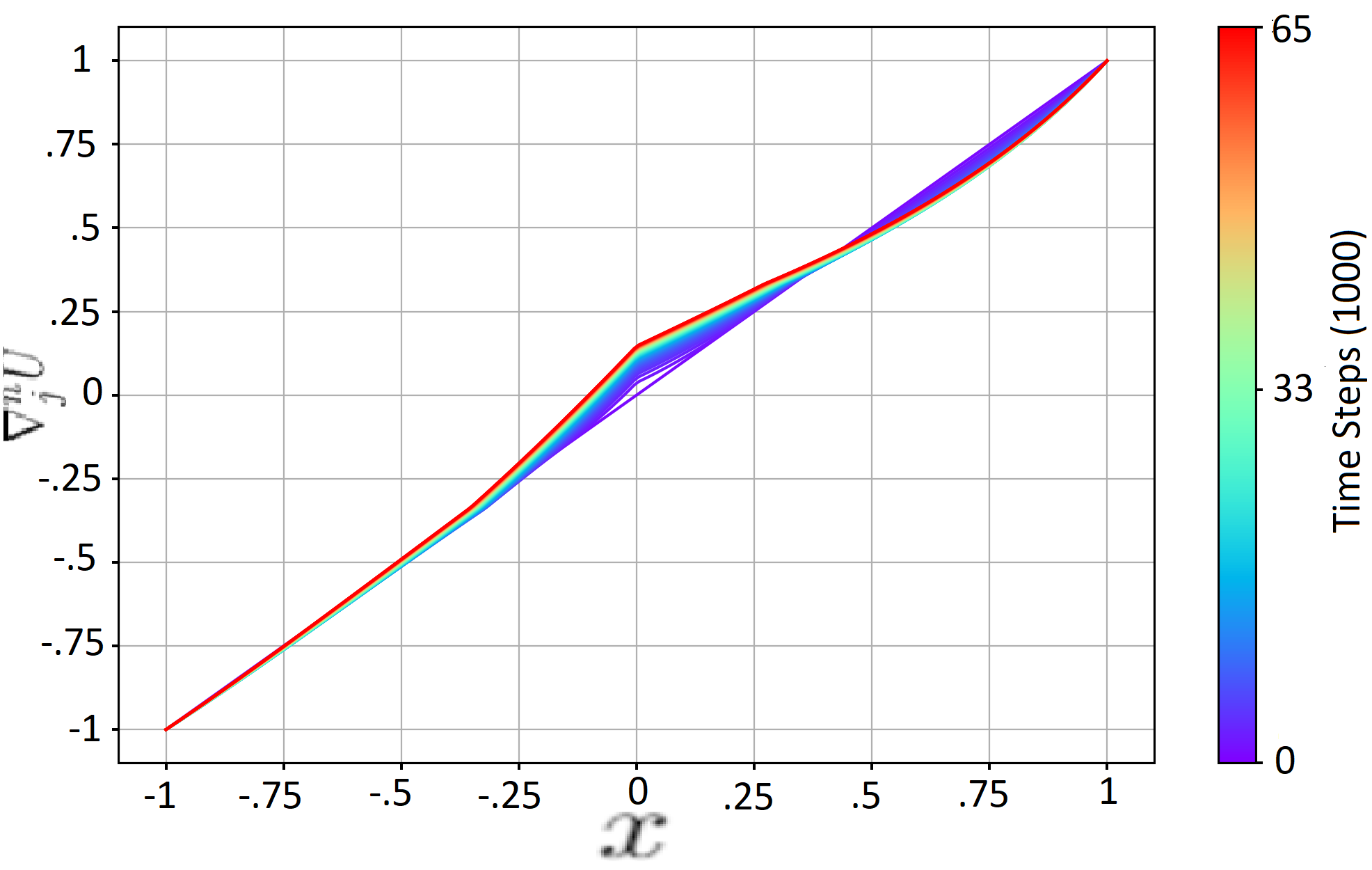}
      \caption{$\na U$ Over Time}
    \end{subfigure}%
    \begin{subfigure}{.5\linewidth}
      \includegraphics[width = .95\linewidth]{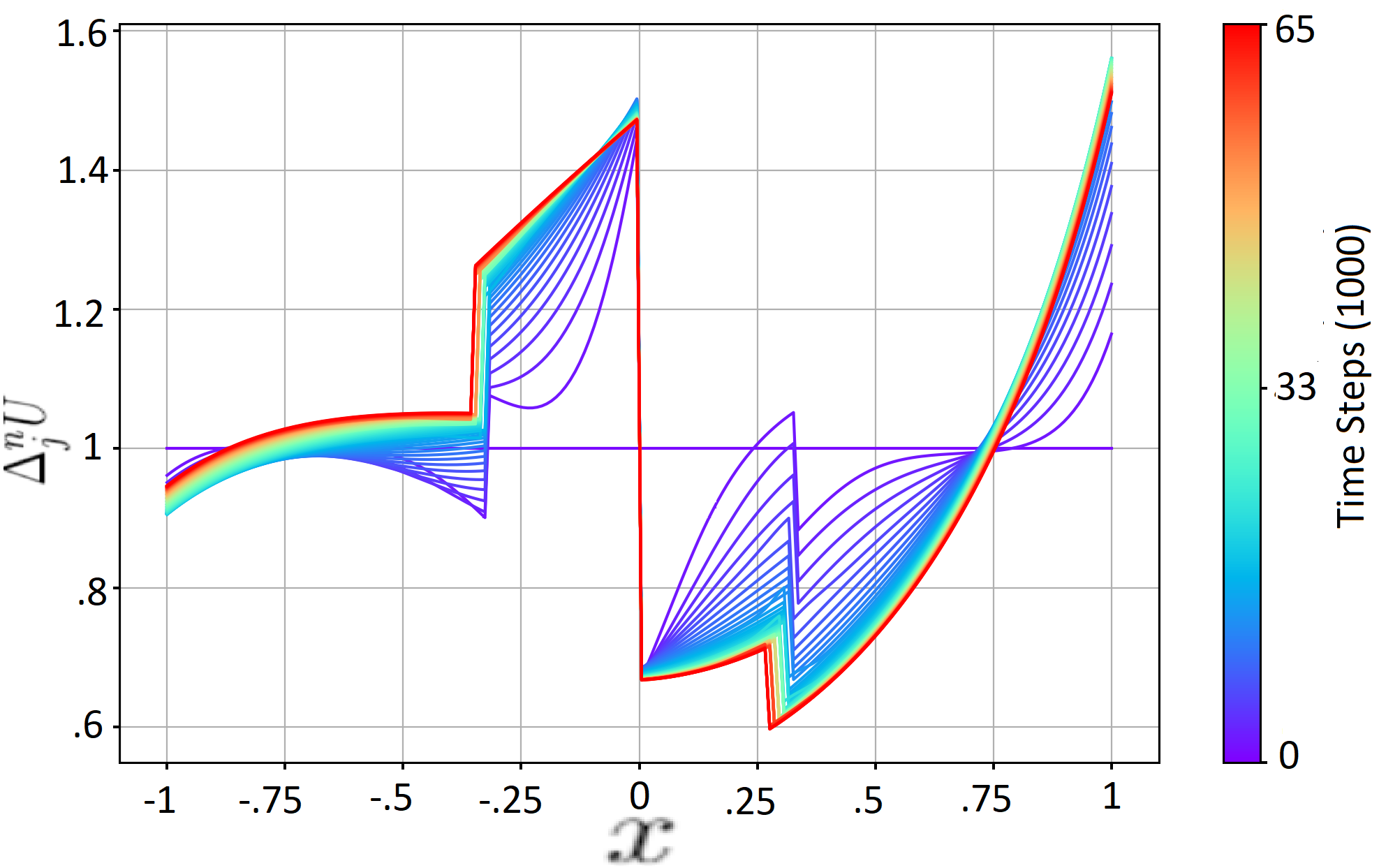}
      \caption{$\Delta^n_j U$ Over Time}
    \end{subfigure}
    \caption{Graphs of \ref{morePieces}, 356s to reach tolerance \\
    $\epsilon = 0.01, \quad \max\limits_{j} |T(x_j) - \nabla^n_j U| \leq  0.025$}
    \label{fig:pieces2}
\end{figure}

\begin{table}[H]
    \centering
    \begin{tabular}{||c|c|c|c|c||}
    \hline 
    \multicolumn{3}{||c}{ } & \multicolumn{2}{c||}{CPU Time (s)}\\
      \hline
    Tolerance & Iterations & $t_{total}$ & Numerical & Analytical \\
     \hline \hline
     0.1&1&7.616e-06&0.1283 & 0.0709\\
     \hline
     0.01&64445&0.4909&355& 71.4\\
     \hline
     0.001&184774&1.408&1166 & 202\\
     \hline
    \end{tabular}
    \caption{Numerics for Piecewise Function}
    \label{table:OtherPiecesTab}
\end{table}

From this we were able to see that our scheme seems to also converge within tolerance even when given a piecewise function for both $f$ and $g$. Although only the points of discontinuity in $f$ are seen in $\na U$, we see all points of discontinuity in both $f$ and $g$ appear in the graph of $\Delta^n_j U$. This aligns with the expectation that $\Delta^n_j U$ approximates $u''$. We observed that piecewise functions tend to take more computational time to reach tolerance compared against smooth cases with similar upper and lower bounds on $f$ and $g$. Even so, experimentally we found that closeness to zero had more of an effect on computational time. 

Standard numerical integrators can have difficulties integrating discontinuous functions accurately and efficiently. Therefore, in cases involving piecewise functions it may be necessary to alter the error tolerance methods. One solution may be to implement function for the exact integral, calculated analytically if possible. Another would be to implement a specialized numerical integrator capable of handling piecewise functions. Both methods can also significantly improve computational time. See Table \ref{table:OtherPiecesTab} for CPU time differences of the standard numerical integrator and exact analytical integrator for \ref{morePieces}. 

\subsection{Quantile Example}
Note that \eqref{quantileInverse} provides a way to calculate the inverse of $G(x)$ if $F(x) = x$; that is, if we let $f \sim \text{Unif}[A,B]$. The numerical scheme \eqref{finitedifference} thus provides a way to compute the quantile function of any probability distribution that is supported on a bounded interval and stays away from zero.
\begin{example}\label{QuantileEx}
$$f(x) = 1, \quad g(x) = \frac{e^{\kappa\cos(x)}}{2 \pi \mathcal{I}_0(\kappa)}$$
\end{example}
The function $g$ is an example of a von Mises distribution with $\mu = 0$ and $\kappa = 1$. Recall that $\mathcal{I}_0$ is the modified Bessel function of the first kind and of order zero. In this case $f:[0,1] \to \mathbb{R}$ and $g:[-\pi, \pi] \to \mathbb{R}$ with $u_0 = \pi(x^2 - x)$. \\

\begin{figure}[H]
    \centering
    \begin{subfigure}{.5\linewidth}
      \includegraphics[width =.95\linewidth]{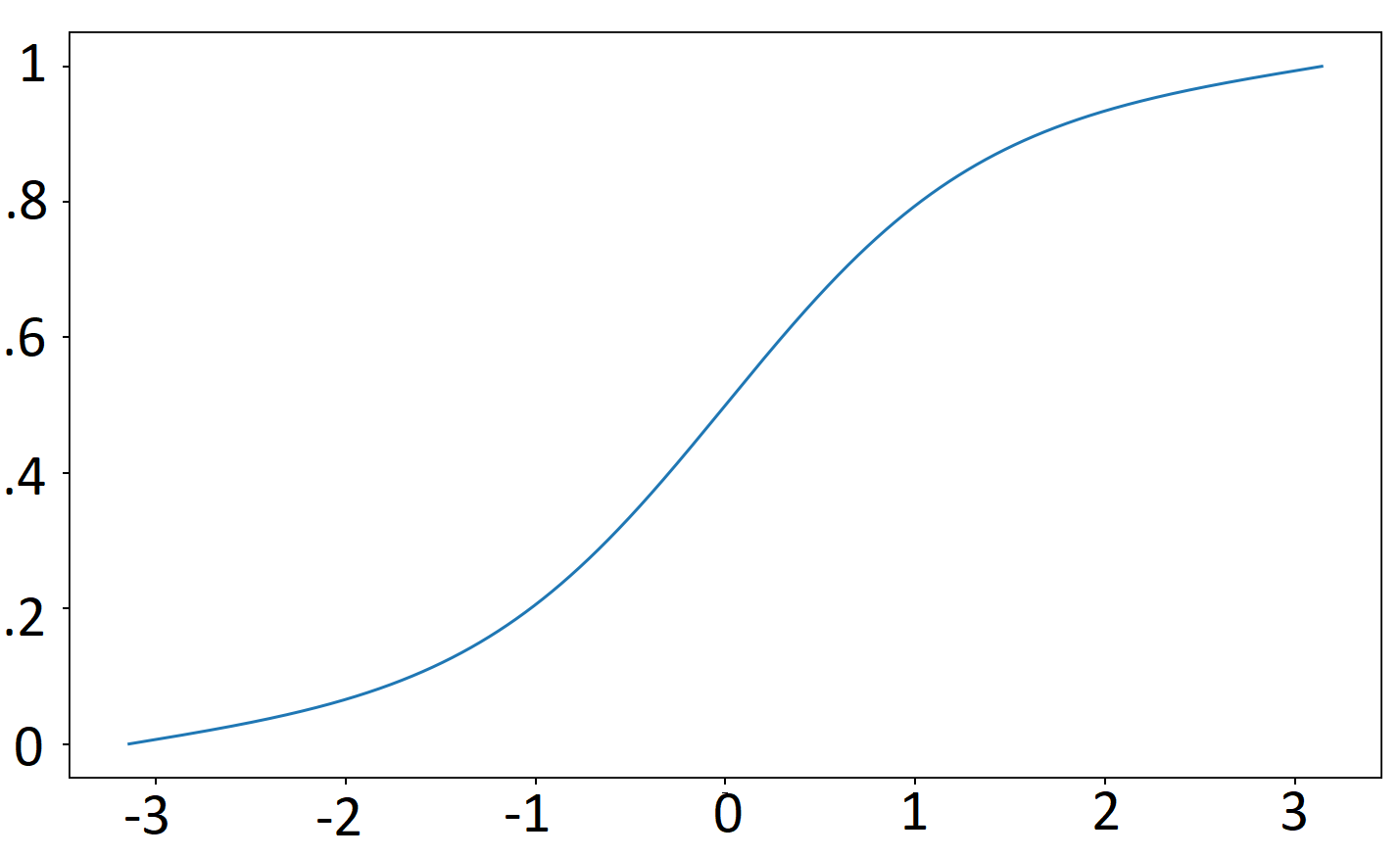}
      \caption{CDF of von Mises Distribution}
    \end{subfigure}%
    \begin{subfigure}{.5\linewidth}
      \includegraphics[width = .95\linewidth]{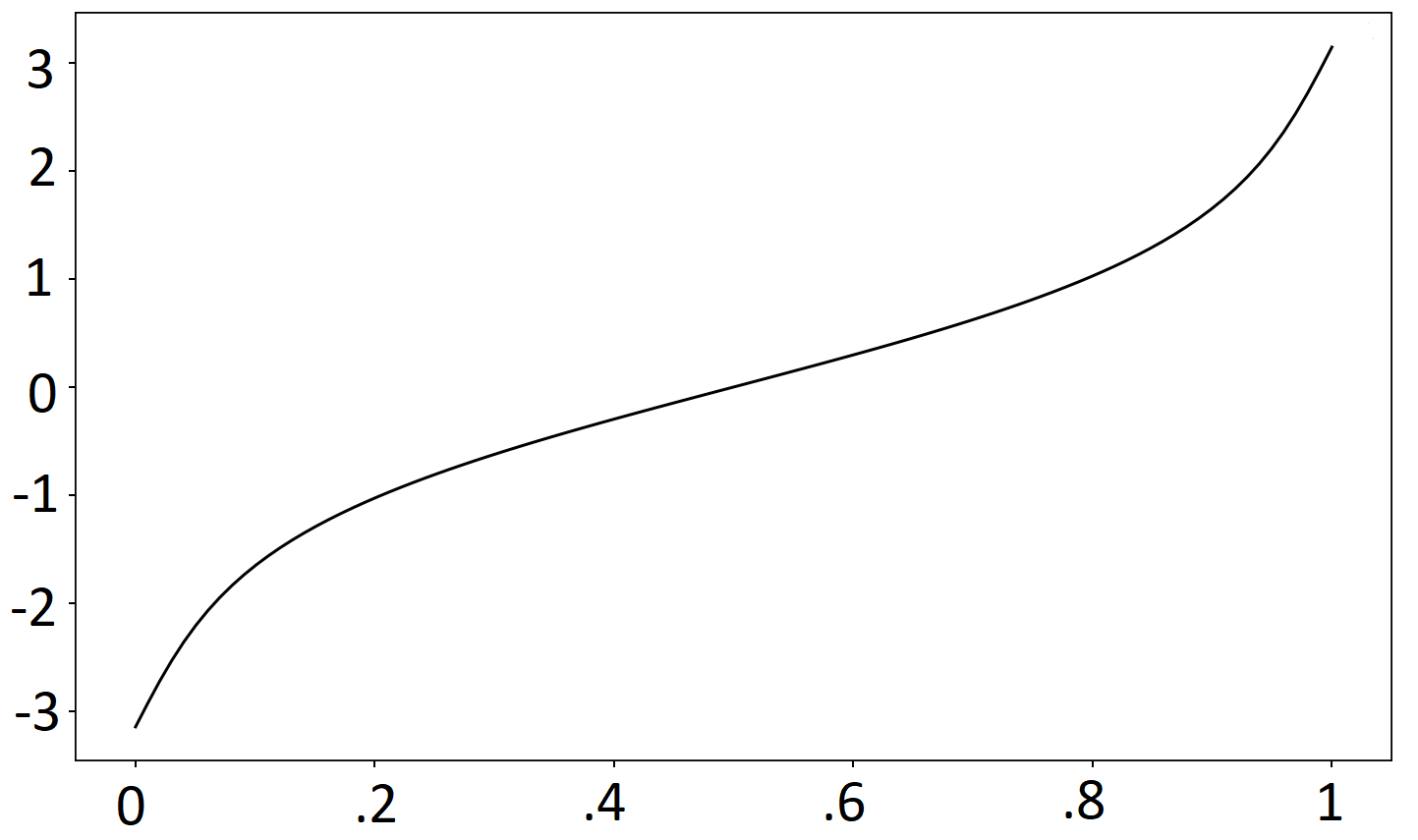}
      \caption{Quantile of von Mises Distribution}
    \end{subfigure}
    \caption{Graphs  \ref{QuantileEx}, 801.8s to Reach Tolerance \\
    $\epsilon = 0.001, \quad \max\limits_{j=0,\ldots,J} |T(x_j) - \nabla^n_j U| \leq  0.022$}
    \label{fig:Quantile}
\end{figure}

The usual method for numerically approximating a quantile function involves numerically integrating the probability density function, $g$ to get an approximation of the cumulatively distribution function $G$, then inverting the $x$ and $y$ coordinates of $G$. 
Inverting a grid function can result in a non-uniform grid for the numerical inverse, and if $G$ has large derivative, then the grid points of the domain of $G^{-1}$ will be concentrated along the points where the derivative of $G$ is large. Using \ref{finitedifference} to compute an approximation of $G^{-1}$ does not involve inverting a grid function, and therefore provides better resolution, though it is computationally much slower.

\section*{Conclusion and Outlook}
 
 We have shown error bounds on the finite difference scheme for the 1-D parabolic optimal transport problem and provided relevant numerical examples.
If the error conditions \eqref{errorConditions} are met, we have seen empirically that $\Delta^n_j U$ stays positive when calculating the optimal map, allowing for the application of Theorem \ref{theo: error bound}. We hope to further investigate this condition and prove that $\Delta^n_j U$ always stays positive in future work. 
 
 The error given by \eqref{espError} provides a way to quantify the error at any given time step $n$, but  it is not guaranteed to stay small as $n\to \infty.$  In practice, this is not detrimental to the efficacy of \eqref{finitedifference} as the scheme is only run for a finite number of time steps until it is within tolerance. We also hope to be able to bound the error in \eqref{finitedifference} for all $n$ in future work.
 
We have a first order term in the error bound at the boundary, which adversely impacts our schemes accuracy in approximating \eqref{POT}. This does not seem to have much impact on the accuracy our scheme in  approximating the optimal map as it is empirically  it is still able to get within tolerance. In the future we hope to investigate how to replace this first order error term with a second order one in order to create a scheme that can more accurately approximate \eqref{POT}. 

 Additionally our research was only carried out for one spatial dimension. Further research is necessary to devise robust numerical methods for the optimal transport problem in two dimensions and higher. Additional work needs to be done to understand why piecewise functions used in the \eqref{POT} are able to converge to \eqref{OT} and proven mathematically.

\section*{Acknowledgements}
This research was supported by the following grants; NSA Award No. H98230-20-1-0006 and NSF Award No. 1852066. We would like to thank Professor Robert Bell and Michigan State University for organizing SURIEM, and we would like to thank our research mentors Farhan Abedin and Jun Kitagawa for their guidance.

\bibliography{MAarxiv}
\bibliographystyle{plain}
\appendix\label{bounds}
\begin{center}
  \section{Derivative Estimates}
\end{center}

By differentiating \eqref{POT} in $t$, we find that the function $w(t,x) := v_t(t,x)$ solves the linearized equation
\begin{equation}\label{linearization}\tag{L-E}
\begin{cases}
w_t - \left(\frac{1}{v_{xx}}\right) w_{xx} - \left(\frac{g'(v_x)}{g(v_x)} \right) w_x = 0 & \text{in }(0,\infty) \times (A,B),\\
w_x(t,A) = 0, \quad w_x(t,B) = 0 & \text{for all } t \geq 0.
\end{cases}
\end{equation}

\subsection{Bounds on \texorpdfstring{$v_{xx}$}{}}

Let $w = v_t$ as above. Then $w$ satisfies \eqref{linearization}. Since $v_{xx} \geq 0$, the parabolic maximum principle and Hopf's lemma implies 
$$\max_{x\in[A,B], \ t \geq 0} w(t,x) = \max_{x\in[A,B]} w(0,x),  \qquad \min_{x\in[A,B], \ t \geq 0} w(t,x) = \min_{x\in[A,B]} w(0,x).$$
In terms of $v_t$, this means
$$\max_{x\in[A,B], \ t \geq 0} v_t(t,x) = \max_{x\in[A,B]} v_t(0,x),  \qquad \min_{x\in[A,B], \ t \geq 0} v_t(t,x) = \min_{x\in[A,B]} v_t(0,x).$$
Evaluating \eqref{POT} at $t = 0$, we get
$$v_t(0,x) = \log(v_{xx}(0,x)) - \log\left(\frac{f(x)}{g(v_x(0,x))} \right) = \log(u_0''(x)) - \log\left(\frac{f(x)}{g(v_x(0,x))} \right) = \log\left(\frac{u_0''(x) g(v_x(0,x))}{f(x)} \right) $$
Consequently,
$$\min_{x \in [A,B]}\frac{u_0''(x) g(v_x(0,x))}{f(x)} \leq e^{v_t(0,x)} \leq \max_{x \in [A,B]}\frac{u_0''(x) g(v_x(0,x))}{f(x)} .$$
In particular,
\begin{align*}
\frac{\min_{x \in [A,B]} u_0''(x) \min_{y \in [C,D]} g(y)}{\max_{x \in [A,B]} f(x)} & \leq \min_{x\in[-1,1], \ t \geq 0} e^{v_t(t,x)} \\
& \leq \max_{x\in[A,B], \ t \geq 0} e^{v_t(t,x)}  \leq  \frac{\max_{x \in [A,B]} u_0''(x) \max_{y \in [C,D]} g(y)}{\min_{x \in [A,B]} f(x)}.
\end{align*}
Since \eqref{POT} implies
$$v_{xx}(t,x) = \frac{e^{v_t(t,x)} f(x)}{g(v_x)} \quad \text{for all } (t,x) \in (0,\infty) \times (-1,1),$$
we conclude that
\begin{footnotesize}
\begin{equation}\label{boundsonvxx}
\min_{x \in [A,B]} u_0''(x)\left( \frac{\min_{x \in [A,B]} f(x)} {\max_{x \in [A,B]} f(x)}\right) \left( \frac{\min_{y \in [C,D]} g(y)}{\max_{y \in [C,D]} g(y)}\right) \leq v_{xx}(t,x) \leq  \max_{x \in [A,B]} u_0''(x)\left( \frac{\max_{x \in [A,B]} f(x)} {\min_{x \in [A,B]} f(x)}\right) \left( \frac{\max_{y \in [C,D]} g(y)}{\min_{y \in [C,D]} g(y)}\right).
\end{equation}
\end{footnotesize}

\subsection{Bounds on \texorpdfstring{$v_{xxx}$}{}}

We let $w := v_t$ and $\phi := v_{xx}$. Differentiating $\eqref{POT}$ w.r.t $x$ gives us the relation
\begin{equation}\label{phiandw}
w_x = \frac{\phi_x}{\phi} - F(x) + G(v_x) \phi,
\end{equation}
where $F(x) := \frac{f'(x)}{f(x)}$ and $G(y) := \frac{g'(y)}{g(y)}$. Since $\phi$ is uniformly bounded, it follows that an estimate for $w_x$ yields an estimate for $\phi_x = v_{xxx}$ under appropriate assumptions on $f$ and $g$.

Recall the linearized equation \eqref{linearization}
$$
\begin{cases}
L(w) := w_t - \phi^{-1} w_{xx} = G(v_x) w_x & \text{in }(0,\infty) \times (A,B),\\
w_x(t,A) = 0, \quad w_x(t,B) = 0 & \text{for all } t \geq 0.
\end{cases}
$$
Differentiating the equation $L(w) = 0$ w.r.t $x$ gives
\begin{align*}
L(w_x) & = -\frac{\phi_x w_{xx}}{\phi^2} + G'(v_x) \phi w_x + G(v_x) w_{xx} \\
& =  -\frac{w_{xx}(w_x + F - G(v_x) \phi)}{\phi} + G'(v_x) \phi w_x + G(v_x) w_{xx} \\
& =  \left(2G(v_x) -\frac{(w_x + F)}{\phi}\right)w_{xx} + G'(v_x) \phi w_x.
\end{align*}
Consider the auxiliary function
$$\eta = \psi_1(w_x) + \psi_2(w),$$
where $\psi_1, \psi_2$ are functions to be determined. We then have
\begin{itemize}
\item $\eta_t = \psi_1'(w_x) w_{xt} + \psi_2'(w)w_t$
\item $\eta_x = \psi_1'(w_x) w_{xx} + \psi_2'(w)w_x,$
\item $\eta_{xx} = \psi_1''(w_x) w_{xx}^2 + \psi_1'(w_x) w_{xxx} + \psi_2''(w)w_x^2 + \psi_2'(w)w_{xx}.$
\end{itemize}
Consequently,
\begin{align*}
L(\eta) & = \eta_t - \phi^{-1}\eta_{xx} \\
& = \psi_1'(w_x) w_{xt} + \psi_2'(w)w_t - \frac{1}{\phi} \left( \psi_1''(w_x) w_{xx}^2 + \psi_1'(w_x) w_{xxx} + \psi_2''(w)w_x^2 + \psi_2'(w)w_{xx}\right) \\
& =  \psi_1'(w_x) L(w_x) +  \psi_2'(w)L(w)  - \frac{1}{\phi} \left( \psi_1''(w_x) w_{xx}^2 + \psi_2''(w)w_x^2 \right) \\
& =  \psi_1'(w_x)\left(2G(v_x) -\frac{(w_x + F)}{\phi} \right) w_{xx} + \left(\psi_1'(w_x)G'(v_x) \phi  + \psi_2'(w) G(v_x) \right) w_x - \frac{1}{\phi} \left( \psi_1''(w_x) w_{xx}^2 + \psi_2''(w)w_x^2 \right).
\end{align*}
Suppose now that $\eta$ attains a maximum value at a point $(t_0,x_0) \in [0,\infty) \times [A,B]$. We assume  $\psi_1$ is increasing and satisfies $\lim_{s \to 0} \psi_1(s) = -\infty$, and that $\psi_2$ is bounded on compact sets.

{\bf Case 1:} $t_0 \geq 0, \ x_0 = A \text{ or } B$.

In this case,  since $w$ is uniformly bounded and $w_x(t,A) = w_x(t,B) = 0$, it follows that $\lim_{x \to A^{+}}\eta(t,x) = \lim_{x \to B^{-}} \eta(t,x) = -\infty$.

{\bf Case 2:} $t_0 > 0, \ x_0 \in (A,B)$.

We have  $\eta_x(t_0,x_0) = 0$ and $L (\eta) \geq 0$ at $(t_0,x_0)$. This implies
$$\psi_1'(w_x) w_{xx} = -\psi_2'(w)w_x \quad \text{at } (t_0,x_0).$$ 
Substituting this into the equation for $L(\eta)$ yields
$$0 \leq \left[ \psi_2'(w)\left(\frac{(w_x + F)}{\phi} - G(v_x)\right) + \psi_1'(w_x)G'(v_x) \phi \right] w_x - \frac{1}{\phi} \left( \psi_1''(w_x) \frac{\psi_2'(w)^2}{\psi_1'(w_x)^2} + \psi_2''(w)\right)w_x^2 .$$
We now choose 
$$\psi_1(s) = \frac{1}{2}\log(s^2), \quad \psi_2(s) = \a s, \ \a \text{ constant}.$$
Then since
$$\psi_1'(s) = \frac{1}{s}, \quad \psi_1''(s) = -\frac{1}{s^2}, \quad \psi_2'(s) = \a, \quad \psi_2''(s) = 0,$$
we find that
$$0 \leq \left[ \a \left(\frac{(w_x + F)}{\phi} - G(v_x)\right) + \frac{1}{w_x}G'(v_x) \phi \right] w_x + \frac{\a^2 w_x^2}{\phi}.$$
Rearranging terms, we get
$$0 \leq \a(1 + \a)w_x^2 +  \a \left(F - G(v_x) \phi \right)w_x + G'(v_x) \phi^2.$$
Letting $\a = -\frac{1}{2}$ yields
$$w_x^2 +2(F- G(v_x)\phi )w_x - 4G'(v_x) \phi^2 \leq 0 \quad \text{at } (t_0,x_0) .$$
This implies
$$(w_x + F- G(v_x)\phi )^2 \leq (F - G(v_x)\phi )^2 + 4|G'(v_x)| \phi^2.$$
Consequently,
$$|w_x(t_0,x_0)| \leq |F- G(v_x)\phi| + \sqrt{ (F - G(v_x)\phi )^2 + 4|G'(v_x)| \phi^2} \leq C_1(u_0,f,g).$$
Since $\eta(x,t) \leq \eta(x_0,t_0)$, we have
$$\log(|w_x(t,x)|) \leq  \log(|w_x(t_0,x_0)|) +  \frac{1}{2}|w(t,x)- w(t_0,x_0)| \leq \log(C_1(u_0,f,g)) + \max_{x \in [A,B]} |v_t(0,x)|.$$
Exponentiating this gives
$$|w_x(t,x)| \leq C_1(u_0,f,g) e^{\max_{x \in [A,B]} |v_t(0,x)|} \quad \text{for all } (x,t) \in [A,B] \times [0,\infty).$$

{\bf Case 3:} $t_0 = 0,\ x_0 \in (A,B)$

For any $t \geq 0$ and $x \in [A,B]$
\begin{align*}
\eta(t, x) & \leq \eta(0,x_0) \\
& = \psi_1(w_x(0,x_0)) + \psi_2(w(0,x_0)) \\
& = \psi_1 \left(\frac{u_0'''(x_0)}{u_0''(x_0)} - F(x_0) + G(u_0'(x_0)) u_0''(x_0) \right) + \psi_2 (v_t(0,x_0)) \\
& = \log\left(\bigg|\frac{u_0'''(x_0)}{u_0''(x_0)} - F(x_0) + G(u_0'(x_0)) u_0''(x_0)\bigg| \right)  - \frac{v_t(0,x_0)}{2} \\
& \leq \log\left( C_2(u_0,f,g) \right) -  \frac{v_t(0,x_0)}{2}.
\end{align*}
Consequently, 
$$|w_x(t,x)| \leq C_2(u_0,f,g) e^{\max_{x \in [A,B]} |v_t(0,x)|} \quad \text{for all } (x,t) \in [A,B] \times [0,\infty).$$

\subsection{Bounds on \texorpdfstring{$v_{xxxx}$}{}}

Let $z = w_x$. Differentiating the relation \eqref{phiandw} w.r.t $x$ shows that
$$z_x = w_{xx} = \frac{\phi_{xx}}{\phi} - \frac{\phi_x^2}{\phi^2} - F'(x) + G'(v_x)\phi^2 + G(v_x) \phi_x.$$
Consequently, an estimate for $z_x$ combined with an estimate for $\phi_x = v_{xxx}$ implies an estimate for $\phi_{xx} = v_{xxxx}$ under appropriate assumptions on $f$ and $g$.  

\subsubsection{Boundary Estimate}

We first bound $|z_x|$ on the boundary. Define the linear operator
$$ \tilde{L} := \partial_t - \phi^{-1} \partial^2_{xx} - \b \partial_{x} \quad \text{where } \b := G(v_x)-\frac{\phi_x}{\phi}.$$
Let $\mu := (G(v_x))_x z$. Then $z$ satisfies the initial and boundary value problem
$$
\begin{cases}
\tilde{L}(z) & = \mu,\\
z(t, A)&=z(t, B)=0 \quad \text{for all } t \geq 0,\\
z(0, \cdot)&= z_0.
\end{cases}$$
Consider the barrier function 
$$\eta(x):=\gamma(e^{\alpha(x-B)}-1).$$
where $\alpha$, $\gamma>0$ to be determined. Notice that
$$\eta(B) = 0 \quad  \text{ and } \quad \eta(A) = \gamma(e^{-\alpha(B-A)}-1) \leq 0.$$
Since
\begin{itemize}
\item $\eta_x = \gamma \a e^{\a(x-B)} $
\item $\eta_{xx} = \gamma \a^2 e^{\a(x-B)}$
\end{itemize}
we have
$$\tilde{L}(\eta - z) = \tilde{L}(\eta) + \mu = -\phi^{-1} \gamma \a^2 e^{\a(x-B)} - \beta \gamma \a e^{\a(x-B)}  + \mu = - \gamma \a e^{\a(x-B)}(\phi^{-1}\a + \beta) + \mu.$$
Let $\a > 0$ be chosen so that $\phi^{-1}\a + \beta \geq 1$. Then
$$ - \gamma \a e^{\a(x-B)}(\phi^{-1}\a + \beta) \leq -\gamma \a e^{\a(A-B)}.$$
We can now choose $\gamma$ so that $\gamma \a e^{\a(A-B)} \geq \max|\mu|$ to get $\tilde{L}(\eta - z) \leq 0$. 

We now show that $\eta \leq z$ on the parabolic boundary. First, we have $\eta(t, B) = 0 = z(t, B)$ and $\eta(t, A) \leq 0 = z(t, A)$ for all $t>0$. Next, let $\psi(x) = \eta(0,x) - z_0(x)$. Then $\psi(B) = 0$ and $\psi'(x) = \gamma \a e^{\a(x-B)} - z_0'(x)$. By Taylor's theorem, for each $x\in [A, B]$ there exists $\xi_x\in (x,B)$ such that
$$\psi(x) = \psi(B) + \psi'(\xi_x)(x-B) = (\gamma \a e^{\a(\xi_x-B)} - z_0'(\xi_x))(x-B).$$
Now since $e^{\a(x-B)} \geq e^{\a(A-B)}$ for all $x \in [A,B]$, we
have
$$\gamma \a e^{\a(\xi_x-B)} - z_0'(\xi_x) \geq \gamma \a e^{\a(A-B)} - \max|z_0'| \geq 0 \quad \text{if } \gamma = \a^{-1} e^{\a(B-A)} \max|z_0'|.$$
This implies $(\gamma \a e^{\a(\xi_x-B)} - z_0'(\xi_x))(x-B) \leq 0$ and so $\phi(x) \leq 0$ for all $x \in [A,B]$.

We have thus shown that $\eta \leq z$ on the parabolic boundary and $\tilde{L}(\eta - z) \leq 0$. The parabolic maximum principle thus implies $\eta \leq z$ everywhere. In particular, for any $t \geq 0$, since $x - B \leq 0$, we have 
\begin{align*}
\frac{z(t, x)-z(t, B)}{x-B}\leq  \frac{\eta(t, x)-\eta(t, B)}{x-B}\to \gamma\alpha \quad \text{ as } x \to B,
\end{align*}
giving an upper bound on $z_x(t, B)$.  The same argument with $z$ replaced by $-z$ give a lower bound of $-\gamma\alpha$, in particular 
\begin{align*}
 \lvert z_x(t, B)\rvert\leq \gamma\alpha,\quad \forall t\geq 0.
\end{align*}
The argument works in a similar fashion for the endpoint $x = A$.

\subsubsection{Interior Estimate}

Recall that 
$$L(z) = \left(2G(v_x)-\frac{(z + F)}{\phi} \right)z_{x} + G'(v_x) \phi z.$$
Differentiating this equation w.r.t. $x$ gives
\begin{small}
$$L(z_x) =  \left(2G(v_x)-\frac{(z + F)}{\phi} -\frac{\phi_x}{\phi^2}  \right) z_{xx} + \left(3 G'(v_x) \phi - \frac{(z_x + F')}{\phi} + \frac{(z+F)\phi_x}{\phi^2} \right)z_x + (G'(v_x)\phi)_x z.$$
\end{small}
Consider the auxiliary function
$$\eta = \psi_1(z_x) + \psi_2(z),$$
where $\psi_1, \psi_2$ are functions to be determined. Then as before, we have
\begin{align*}
L(\eta) = & \  \psi_1'(z_x) L(z_x) +  \psi_2'(z)L(z)  - \frac{1}{\phi} \left( \psi_1''(z_x) z_{xx}^2 + \psi_2''(z)z_x^2 \right) \\
=  & \  \psi_1'(z_x) \left[ \left(2G(v_x)-\frac{(z + F)}{\phi} -\frac{\phi_x}{\phi^2}  \right) z_{xx} + \left(3 G'(v_x) \phi - \frac{(z_x + F')}{\phi} + \frac{(z+F)\phi_x}{\phi^2} \right)z_x + (G'(v_x)\phi)_x z\right] \\
& \ +  \psi_2'(z)\left[  \left(2G(v_x)-\frac{(z + F)}{\phi} \right)z_{x} + G'(v_x) \phi z\right]- \frac{1}{\phi} \left( \psi_1''(z_x) z_{xx}^2 + \psi_2''(z)z_x^2 \right).
\end{align*}
Suppose now that $\eta$ attains a maximum value at a point $(t_0,x_0) \in (0,\infty) \times (A,B)$. We have  $\eta_x(t_0,x_0) = 0$ and $L (\eta) \geq 0$ at $(t_0,x_0)$. This implies
$$\psi_1'(z_x) z_{xx} = -\psi_2'(z)z_x \quad \text{at } (t_0,x_0).$$ 
Substituting into the equation for $L(\eta)$ yields
\begin{align*}
0 \leq & \ \left(\frac{ \psi_2'(z) \phi_x}{\phi^2}\right)z_x + \psi_1'(z_x)  \left[ \left(3 G'(v_x) \phi - \frac{(z_x + F')}{\phi} + \frac{(z+F)\phi_x}{\phi^2} \right)z_x + (G'(v_x)\phi)_x z\right] \\
& \ + \psi_2'(z) G'(v_x) \phi z - \frac{1}{\phi} \left( \frac{\psi_1''(z_x) \psi_2'(z)^2}{\psi_1'(z_x)^2} + \psi_2''(z) \right) z_x^2.
\end{align*}
We now choose 
$$\psi_1(s) = \frac{1}{2}\log(s^2), \quad \psi_2(s) = \frac{\a s^2}{2}, \ \a \text{ constant}.$$
Then since
$$\psi_1'(s) = \frac{1}{s}, \quad \psi_1''(s) = -\frac{1}{s^2}, \quad \psi_2'(s) =\a s, \quad \psi_2''(s) = \a,$$
we have
\begin{align*}
0 & \leq \left(\frac{ \a z \phi_x}{\phi^2}\right)z_x + 3 G'(v_x)  - \frac{(z_x + F')}{\phi} + \frac{(z+F)\phi_x}{\phi^2} + (G'(v_x)\phi)_x \left(\frac{z}{z_x}\right) + \a G'(v_x) \phi z^2  - \frac{\a}{\phi} \left( 1 - \a z^2 \right) z_x^2.
\end{align*}
Therefore,
$$\a \left(1-\a z^2 \right) z_x^2 + \left(1 - \frac{ \a z \phi_x}{\phi} \right)z_x \leq \frac{(z+F)\phi_x}{\phi} - F' + 3 \phi G'(v_x) + \phi (G'(v_x)\phi)_x  \left(\frac{z}{z_x}\right) + \a G'(v_x) \phi^2 z^2 .$$
Let $M := \max |w_x|$. Since $|z| \leq M$, if we choose $\a = \frac{1}{2M^2}$, then $1 - \a z^2 \geq 1 - \a M^2 = \frac{1}{2}$. Consequently,
$$\frac{1}{4M^2 }z_x^2 + \left(1 - \frac{ z \phi_x}{2M^2 \phi} \right)z_x \leq \frac{(z+F)\phi_x}{\phi} - F' + 3 \phi G'(v_x) + \phi (G'(v_x)\phi)_x  \left(\frac{z}{z_x}\right) + \frac{G'(v_x) \phi^2 z^2}{2M^2}.$$
Multiplying through by $4M^2$ and then completing the square gives us
$$\left(z_x + 1 - \frac{ z \phi_x}{\phi}\right)^2 \leq 4M^2 \left[\frac{(z+F)\phi_x}{\phi} - F' + 3 \phi G'(v_x) + \phi (G'(v_x)\phi)_x  \left(\frac{z}{z_x}\right) + \frac{G'(v_x) \phi^2 z^2}{2M^2}\right] + \left(1 - \frac{ z \phi_x}{\phi} \right)^2.$$
If $|z_x(t_0,x_0)| \leq 1$, then we have for any $(t,x) \in (0,\infty) \times (A,B)$
$$\eta(t,x) = \frac{1}{2}\log(z_x(t,x)^2) + \frac{z(t,x)^2}{4M^2} \leq  \frac{1}{2}\log(z_x(t_0,x_0)^2) + \frac{z(t_0,x_0)^2}{4M^2} \leq \frac{1}{4}.$$
This, in turn, implies $|z_x(t,x)| \leq e^{\frac{1}{2}}$ for all $(t,x) \in (0,\infty) \times (A,B)$. Therefore, we may assume $|z_x(t_0,x_0)| \geq 1$. This implies
$$\left(z_x + 1 - \frac{ z \phi_x}{\phi}\right)^2 \leq 4M^2 \left[\frac{|(z+F)\phi_x|}{\phi} + |F'| + 3 \phi |G'(v_x)| + \phi |(G'(v_x)\phi)_x | M + \frac{|G'(v_x)| \phi^2}{2} \right] + \left(1 - \frac{ z \phi_x}{\phi} \right)^2.$$
We conclude that
$$|z_x| \leq \bigg|1 - \frac{ z \phi_x}{\phi} \bigg| + \sqrt{4M^2 \left[\frac{|(z+F)\phi_x|}{\phi} + |F'| + 3 \phi |G'(v_x)| + \phi |(G'(v_x)\phi)_x | M + \frac{|G'(v_x)| \phi^2}{2} \right] + \left(1 - \frac{ z \phi_x}{\phi} \right)^2}.$$

\end{document}